\newtheorem{theorem}{Theorem}[section]
\newtheorem{lemma}[theorem]{Lemma}
\newtheorem{proposition}[theorem]{Proposition}
\newtheorem{corollary}[theorem]{Corollary}
\theoremstyle{definition}
\newtheorem{definition}[theorem]{Definition}
\newtheorem{remark}[theorem]{Remark}
\newtheorem{example}[theorem]{Example}
\journal{}
\begin{document}
\begin{frontmatter}

\title{On Targeted Complexity of Discrete Motion}

\author[]{Ameneh~Babaee\corref{cor2}}
\ead{ambabaee057@gmail.com}
\author[]{Hanieh~Mirebrahimi\corref{cor1}}
\ead{h\_mirebrahimi@um.ac.ir}
\author[]{Soheila~Fahimi}
\ead{fahimi1440@gmail.com}

\address{Department of Pure Mathematics, Center of Excellence in Analysis on Algebraic Structures, Ferdowsi University of
Mashhad,\\
P.O.Box 1159-91775, Mashhad, Iran.}
\cortext[cor1]{Corresponding author}

\begin{abstract}
In this paper, we investigate discrete topological complexity $TC(K)$ introduced for situations where the configuration space possesses a simplicial structure.
Let $K$ be a complex and let $L$ be a subcomplex considered as the target of the motion. We introduce targeted simplicial complexity $TC(K,L)$, which yields smaller values than the discrete version $TC(K)$. We then demonstrate that targeted simplicial complexity is strongly homotopy invariant and it varies between  simplicial LS-categories of $K$ and $K \prod K$. Utilizing this information, we calculate targeted simplicial complexity for scenarios such as strongly collapsible complexes.
Finally, we compare targeted simplicial complexity with relative topological complexity and we show that $TC(|K|, |L|) \le TC (K,L)$ where $|\cdot|$ denotes the geometric realization functor. Although relative topological complexity is generally lower than targeted simplicial complexity, they are equal in certain cases, such as arbitrary wedges of triangulated circles.
\end{abstract}

\begin{keyword}
Simplicial complex \sep discrete topological complexity \sep targeted topological complexity \sep simplicial fibration \sep simplicial LS-category
\MSC[2010]{
05E45 \sep 55M30 \sep 14D10 \sep 18G30  	\sep 55U10  }
\end{keyword}

\end{frontmatter}

\section{Introduction and Motivation}
Today, industries, medicine, and even daily life have become almost impossible without the use of robots. Some tasks are performed with robots guided by humans, but the main advantage lies in using automatic robots, which save time and manpower without direct human guidance. An automatic robot is a mechanical device pre-programmed to move without human intervention. These robots require programming to determine their movements from one state to another, leading to foundation of the motion planning problem in robotics.

This article presents solutions to solve the motion planning problem, building on Farber's work. In 2003, Farber introduced the concept of topological complexity to optimize robot motion by stabilizing motion rhythm \cite{Far}. Topological complexity is a numerical value indicating the minimum number of programs needed to move through a configuration space. As topological complexity increases, the number of required programs also increases. Farber chose this term because he viewed the configuration space as a general topological space, with zero topological complexity in shrinkable spaces and increasing complexity as the space moves away from contractibility.
Further studies on topological complexity explored applications such as complexity of maps \cite{Pav}, complexity of projective spaces \cite{Farproj}, and combinatorial complexity \cite{Combi}. Various versions of topological complexity, including simplicial complexity for simplicial complexes as a generalization of graphs, were introduced in parallel studies.

Although the configuration space is generally assumed to be a topological space, studying configuration spaces only with topological tools seems to be a complicated and difficult process. Therefore, in many cases, researchers investigated specific spaces as configuration spaces. Reference \cite{Mes} studied manifolds as  configuration spaces. However, in more complicated cases, path planner algorithms consider the robot's motion point-by-point and discrete which can be modelled explicitly by graphs; The points are the vertices of graph  connected by edges. In motion planning algorithms, programmers often analyse configuration space using graph geometry. Among the common traditional algorithms that utilize graphs for planning robot movement are road map and cell analysis algorithms \cite{Teja}. In modern algorithms, such as genetic algorithms \cite{Li} and ant colony algorithms, the focus is on the optimization, but the geometric of configuration space of robots is still using graphs. It appears that except for cases involving robots moving continuously \cite{5}, the motion planning for other robots can be solved by using graph geometry with less complexity.
On the other hand, the combinatorial approach in algebraic topology is applicable and beneficial in many fields. For instance, combinatorial topology has applications in various areas of computer science, including distributed computing, sensor networks, semantics of concurrency, robotics, and vision. Therefore, the use of simplicial complexes is significant for some topologists, particularly in the field of robotics.

Several viewpoints of the combinatorial version of topological complexity have been introduced, some of which we mention: In \cite{Combi}, topological complexity for finite partially ordered sets was presented using the concept of combinatorial path, termed combinatorial complexity (CC). One of the main results in this article was that the combinatorial complexity of a finite space $P$ serves as an upper bound for the topological complexity of the order complex $\kappa (P)$. Additionally, in \cite{Simp}, Gonzales adapted Farber's topological complexity to the domain of simplicial complexes (SC) by employing the notion of the barycentric subdivision functor and the direct product $K\times K$. Another discrete version of TC was established in \cite{dis}, focusing on finite simplicial complexes. It is important to note that the perspectives presented in \cite{dis} and \cite{Simp} differ.
Nonetheless, Fernandez-Ternero and others \cite{sim} introduced an invariant definition of discrete topological complexity using the concepts of simplicial fibration and the path complex $PK$.

Note that other discrete versions were also introduced and studied, but these perspectives did not incorporate simplicial complexes, namely discrete topological complexity introduced in \cite{Has}.
 In this paper, we extend the approach of \cite{dis} with a twist, considering the robot's movement with specific targets. Targeted motion of the robot optimizes the number of programs and rules because it eliminates the need for planning the useless movements. 
This optimization aids in routing algorithms, such as genetic algorithms, which compare existing routes to find the most optimal route in terms of length and travel time. When the number of routes is reduced, the algorithm requires less time and operations for comparison. The concept of targeted movement was developed based on Short's idea in reference \cite{rel}, which assumed a condition at the path's endpoint for motion planning. Subsequently, in the article \cite{aghil}, the targeted movement of the robot was explored with the condition that the path's endpoint resides in a specific set, focusing on task mappings. In this article, we analyse and discuss the robot's targeted motion in the configuration space utilizing the properties of simplicial complexes.
In the optimization of router algorithms, parameters such as route length, time, cost, etc., are emphasized. However, in the examination of topological complexity and simplicity, the focus is on maintaining the robot's movement rhythm and stability. This is crucial because abrupt and frequent changes in movement increase the risk of robot damage. Moreover, unstable movements can disrupt task execution. Hence, recognizing the significance of motion stability and applying simplicial complexity in robot programming, we introduce the concept of targeted simplicial complexity of the robot within the space modelled by simplicial complexes. To achieve this, we designate a simplicial complex like $K$ and a subcomplex like $L$ as the target complex and define the targeted simplicial complexity denoted by the symbol $TC(K,L)$.

The paper is organized as follows: In section 2, 
we recall the preliminaries needed. Some basic notions about topological complexity, sectional categories, and simplicial complexes are mentioned. Additionally, we review two homotopy relations between simplicial maps called p-homotopy and contiguity. Finally, some results about discrete topological complexity are collected.

In Section 3, we define $TC(K,L)$ for a pair of simplicial complexes $(K, L)$. 
Then we describe motion planning on a simplicial complex $K$ in the targeted motion. We present an equivalent definition of relative topological complexity $TC(X,Y)$ defined in \cite{rel}. 
Then by a similar procedure, we offer the relation between the path complex $P(K,L)$ and $TC(K,L)$ for the pair $(K,L)$. 
This relation shows that $TC(K,L)$ is the number of continuous motion planners in $K$ such that the endpoint of the robot's motion falls within the subcomplex $L$. Hence, we consider the subcomplex $L$ as the target of the robot's motion in $K$ and call $TC(K,L)$ the targeted simplicial complexity.
 Farber \cite{Far} proved that $TC(X)$ equals the homotopy sectional category $hsecat$ of a path fibration defined from $P(X)$ onto $X \times X$ for a path-connected space $X$. 
For the notion of targeted simplicial complexity, the equality $TC(K,L)=hsecat(\pi)$ holds if $\pi :P(K,L)\to K\prod  L$ is the simplicial fibration map defined by $\pi (\gamma )=(\alpha_1 (\gamma ), \omega_1 (\gamma ))$ where $\alpha_1$ and $\omega_1$ denote the initial and terminal simplicial map respectively. Finally, we compare $TC(K,L)$ with the invariant $TC(K)$ introduced in \cite{dis}.

In section 4, we prove that our notion $TC(K,L)$ is an invariant of the strong homotopy type for pairs and then we compare $TC(K,L)$ with the LS-category of $K$ denoted by $scat(K)$ which is defined in \cite{Fer}. More precisely, the inequalities $scat(K) \le TC(K,L) \le TC(K) \le scat (K \prod K)$ hold whenever $K \prod K$ is the categorical product of two copies of $K$. By this fact, it is shown that $TC(K,L) =0$ if and only if $K$ is strongly collapsible; the simplicial version of strongly contractible. Moreover, if the target set is strongly collapsible, then $TC(K,L) = scat(K)$.

In section 5, we compare our new invariant $TC(K,L)$ with the topological complexity of the geometric realization of the pair $(K,L)$ and we present the inequality $TC(|K|,|L|) \le TC(K,L)$. We do this by a theorem from \cite{Int}, which presents a metric on the space $P(X,Y)$. At the end, we show that if the given space can be triangulated as the wedge of some circles, then the equality $TC(|K|,|L|) = TC(K,L)$ holds.

\section{Preliminaries}

In this section we intend to list the prerequisites for the upcoming sections, which are essential for our studies.
Topological complexity was introduced by Farber in \cite{Far} to solve the motion planning problem of mechanical robots. According to this concept, we can find a motion planning algorithm with continuous rules that corresponds a path from point $A$ to point $B$ for every pair of configurations $(A, B)$ in a given topological space $X$. 
A useful tool for studying topological complexity is the concept of the \v{S}varc genus of a map $f:X\to Y$, denoted by $secat(f)$ and defined below.

\begin{definition} 
Let $f:X \to Y$ be a map. The \v{S}varc genus of $f$ is the  minimum number $n\geq 0$ so that we can cover the codomain $Y$ by open sets $V_0,...,V_n$ and for each $j=0,...,n$ there exists a continuous map $s_j:V_j\to X$ such that $f\circ s_j=i_j$, where $i_j:V_j\hookrightarrow Y$ is the inclusion map. 
If the homotopy relation $f\circ s_j\simeq i_j$ holds instead of the equality $f\circ s_j=i_j$, the number $n$ is called homotopy  \v{S}varc genus of $f$ which is denoted by $hsecat(f)$. 
\end{definition}

Note that if the map $f$ is a fibration, then $hsecat(f)=secat(f)$; See \cite{Sch}. 
Topological complexity of the space $X$, denoted by $TC(X)$, was defined as the \v{S}varc genus of the path fibration $p:PX\to X\times X$ defined  by the rule  $p(\gamma )=(\gamma (0),\gamma (1))$. Here $PX$ denotes the space of all paths in $X$ equipped with the compact-open topology. In \cite{dis}, an equivalent definition of $TC(X)$ was presented which is obtained by the fact that the fibration $p$ and the diagonal map $\Delta _X:X\to X\times X$ have the same homotopy \v{S}varc genus.  
In \cite{rel}, Short introduced the concept of relative topological complexity denoted by $TC(X,Y)$ for the pair of spaces $(X,Y)$. This invariant is used to develop a continuous motion planning algorithm on $X$ where the paths must terminate in a specified subset $Y\subseteq X$.

\begin{definition}[\cite{rel}]\label{de2.2n}
Let $X$ be a configuration space and $Y \subseteq X$. The relative topological complexity of pair $(X, Y)$ is the \v{S}varc genus of fibration map $\pi:P(X,Y)\to X\times Y$ defined by $\pi (\gamma )=(\gamma (0),\gamma (1))$ where $P(X,Y)$ denotes the space of all the paths in $X$ which end in the subspace $Y$.
\end{definition}

In this paper, our goal is to develop this notion into a discrete version where the configuration space is a simplicial complex. We restrict the endpoints to a subcomplex of the simplicial complex $K$ in order to optimize the number of rules of $TC(K)$ and avoid extra computations. 
For example, let $K$ and $L\subseteq K$ be as follows. In \cite{dis} it was proved that $TC(K)=2$, but as a result of Theorem \ref{intr}, we have $TC(K,L)=1$.

\begin{figure}[!ht]
\centering
\parbox{5cm}{
\subfloat[$K$]
{\begin{tikzpicture}[scale=0.6,	 thick,simple/.style={ball color=black,inner sep=1cm,circle,color=black,text=black, minimum size=0mm},‌
Ledge/.style={to path={
.. controls +(0:1) and +(45:1) .. (\tikztotarget) \tikztonodes}}
]
\node[above]  (a) at ( 2,2){$a$};
‌\node[below] (b) at ( 0,0){$b$};‌
\node[below] (c) at ( 4,0){$c$};‌‌‌
\draw(0,0)--(2,2)--(4,0)--(0,0);
\end{tikzpicture}}}
\parbox{2.5cm}{
\subfloat[$L$]
{\begin{tikzpicture}[scale=0.6,  thick,simple/.style={ball color=black,inner sep=1cm,circle,color=black,text=black, minimum size=0mm},‌
Ledge/.style={to path={
.. controls +(0:1) and +(45:1) .. (\tikztotarget) \tikztonodes}}
]
\node[above]  (a) at ( 2,2){$a$};
‌\node[below] (b) at ( 0,0){$b$};‌
\node[below] (c) at ( 4,0){$c$};‌‌‌
\draw(0,0)--(2,2)--(4,0);
\end{tikzpicture}}}
\end{figure}

We study simplicial complexes as configuration spaces, and we require some tools such as homotopy, sectional category, homotopy equivalence, and other concepts in simplicial modification. First, we review the concepts of contiguity and strong collapse as discussed in \cite{Fer}. These notions are used to define homotopy and contractibility in the context of simplicial literature.

\begin{definition}
Let $K$ and $L$ be two simplicial complexes. 
Two simplicial maps $\phi ,\psi :K\to L$ are contiguous, denoted by $\phi \backsim _c \psi$, if for any simplex $\sigma \in K$, the set $\phi (\sigma )\cup \psi (\sigma )$ is a simplex of $L$. Two simplicial maps $\phi ,\psi :K\to L$ are in the same contiguity class, denoted by $\phi \thicksim \psi$, if there is a sequence
of contiguous simplicial maps $\phi _i:K\to L$, $0\leq i\leq n$, such that
$
\phi =\phi _0 \backsim _c ... \backsim _c \phi _n=\psi
$.
\end{definition}

The contiguity relation between simplicial maps is the simplicial version of homotopy between continuous maps. It can be applied for studying simplicial complexes as configuration spaces. 
In order to define the topological complexity for simplicial complexes, as studied in \cite{dis}, some other basic notions need to be recalled, such as categorical product of simplicial complexes.
The categorical product $K\prod  L$, is a simplicial complex defined below, for any two complexes $K$ and $L$.
 \begin{enumerate}
 \item[$\bullet$]
The set of vertices is defined as $V(K\prod  L):=V(K)\times V(L)$.
 \item[$\bullet$]
For any simplex $\sigma$, $\sigma \in K\prod  L$ if and only if $p_1 (\sigma )\in K$ and $p_2 (\sigma )\in L$, where  $p_1 :V(K\prod  L)\to V(K)$ and $p_2 :V(K\prod  L)\to V(L)$ are the corresponding projections.
 \end{enumerate}

\noindent
Here, by $K^2$ we mean the categorical product of two copies of $K$; That is $K^2 = K \prod K$. 
Notice that there is another definition of product of two simplicial complexes $K$ and $L$, called direct product and mostly denoted by $K\times L$; See \cite{dis}. Contiguity class and categorical product  are the main preliminaries needed to define discrete topological complexity mentioned below.

\begin{definition}[\cite{dis}]\label{de2.2}
The discrete topological complexity $TC(K)$ is the least integer $n\geq 0$ such that $K^2$ can be covered by $n+1$ subcomplexes $\Omega _0,...,\Omega _n$ called Farber subcomplexes, and for each $j=0,...,n$ there exists a simplicial map $\sigma :\Omega _j\to K$ so that $\Delta \circ \sigma\sim i_{\Omega _j}$ where $\Delta :K\to K^2$ is the diagonal map and $i_{\Omega _j}:\Omega _j\to K^2$ is the inclusion map.
\end{definition}

In Definition \ref{de2.2}, discrete topological complexity is defined as the minimum number of Farber subcomplexes. In \cite{sim}, discrete topological complexity was studied using tools such as fibrations and the \v{S}varc genus in simplicial settings. In this paper, we aim to investigate discrete topological complexity from both perspectives. To do so, we revisit some key concepts, including simplicial fibrations, Moore paths, p-homotopy, \v{S}varc genus, and so on.
For $n\geq 1$, let $I_n$ denote the one-dimensional simplicial complex with vertices labeled as integers $\lbrace 0 ,...,n\rbrace $ and  edges represented by $\lbrace j,j + 1\rbrace $ for $0 \leq j < n$. 

\begin{definition}
A map $ p: E \to B $ is a simplicial ﬁbration if for given simplicial maps $H: K \prod  I_m\to B$ and $\phi : K \prod  \lbrace 0\rbrace \to E$ as in the following commutative diagram
\[
\xymatrix{
K\prod  \lbrace 0\rbrace \ar[d]_-{i_0^m} 
\ar[r]^-{\phi} & E \ar[d]^-{p}\\
K\prod  I_m \ar[ur]^-{\widehat{H}} \ar[r]_-{H} & B,\\ 
}
\] 
there exists a simplicial map $ \widehat{H}: K \prod  I_m \to E$ such that $\widehat{H}\circ i_0^m =\phi $ and $p \circ \widehat{H}=H$. \\
If $K$ is finite, then $p$ is called a simplicial finite-fibration.
\end{definition}

Consider the natural triangulation of the real line, denoted by $Z$, whose vertices are all the integers $i\in \mathbb{Z}$ and whose 1-simplices are all the consecutive pairs $\lbrace i,i + 1\rbrace$.
Let $K$ be a simplicial complex. A Moore path in $K$ is a simplicial map $\gamma : Z \to K$
such that there exist integers $ i^-,i^+ \in Z$ satisfying the following two conditions:
\begin{enumerate}
\item[(i)]
$\gamma (i)=\gamma (i^-)$, for all $i\leq i^-$; Put $\gamma ^-:=\max \lbrace i^-:\gamma (i)=\gamma (i^-),$  for all $i\leq i^- \rbrace$,
\item[(ii)]
$\gamma (i)=\gamma (i^+)$, for all $i\geq i^+$; Put $\gamma ^+:=\min \lbrace i^+:\gamma (i)=\gamma (i^+)$,  for all $i\geq i^+ \rbrace$.
\end{enumerate}
The values $\alpha_1 (\gamma):= \gamma (\gamma ^- )$ and $\omega_1 (\gamma ):=\gamma (\gamma ^+)$ are called the initial and final vertex of $\gamma $ respectively. 
Considering this notation, any Moore path $\gamma $ in $K$ can be identified with the restricted simplicial map $\gamma : [\gamma ^-,\gamma ^+] \to K$.
The reverse of Moore path $\gamma$ is defined as $\overline{\gamma } :[-\gamma ^+,-\gamma ^-] \to K$ by
$
\overline{\gamma }(i) =\gamma (-i)
$.
Consider two Moore paths $\gamma $ and $\delta $  in $K$ such that $\omega_1 (\gamma ) = \alpha_1 (\delta )$. The product path $\gamma \ast \delta$ is defined as 
\[
(\gamma \ast \delta )(i)=
\begin{cases}
\gamma (i-\delta ^-), & i\leq \gamma ^+ +\delta ^- \\
\delta (i-\gamma ^+ ), &  i\geq \gamma ^+ +\delta ^- .
\end{cases}
\]

The following definition introduces the concept of path complex denoted by $PK$. 
The path complex is essential in the study of discrete topological complexity.

\begin{definition}[\cite{sim}]
Let $K$ be a simplicial complex. The Moore path complex of $K$, denoted by $PK$, is a full subcomplex of $K^Z$ generated by all Moore paths $\gamma : Z \to K$. The vertices $\lbrace \gamma _0,...,\gamma _p \rbrace \subseteq PK $ defines a simplex in $PK$ if and only if 
$
\lbrace \gamma _0 (i),...,\gamma _p(i),\gamma _0(i+1),...,\gamma _p(i+1) \rbrace
$
is a simplex in $K$ for any integer $i\in \mathbb{Z}$.
\end{definition}

The initial and ﬁnal vertices of any given Moore path $\gamma$ deﬁne simplicial maps $\alpha_1 : PK\to K$ and $\omega_1 : PK \to K$.
Also the simplicial map 
$
p = (\alpha_1 , \omega_1 ):PK\to K\prod  K
$
is a simplicial ﬁnite-ﬁbration (see \cite{sim}).
 The maps $\alpha_1$ and $\omega_1$ lead to the following notion of homotopy.

\begin{definition}[\cite{sim}]
Let $f,g: K \to L$ be simplicial maps. The map $f$ is said to be p-homotopic to $g$, denoted by $f \simeq g$, if there exists a simplicial map $H: K \to PL$ such that $\alpha_1 \circ H = f$ and $\omega_1 \circ H = g$. 
Also $f$ is  a p-homotopy equivalence if there exists a simplicial map $h: L \to K$ such that $h \circ f\simeq id_K$ and $f \circ h \simeq id_L$. 
\end{definition}

The relation of p-homotopy is indeed reflexive and symmetric but not transitive. It is also compatible with both left and right compositions \cite{sim}. Furthermore, p-homotopy is shown to be stronger than contiguity as proven in \cite[Proposition 5.10]{sim} cited below.
\begin{theorem}[\cite{sim}]\label{5.10}
Let $f,g: K \to L$ be simplicial maps. Then 
\begin{enumerate}
\item[(i)]
If $f\sim g$, then $f\simeq g$.
\item[(ii)]
If $K$ is finite and $f\simeq g$, then $f\sim g$.
\end{enumerate}
\end{theorem}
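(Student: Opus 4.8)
The plan is to exploit the precise match between the contiguity condition ``$f(\sigma)\cup g(\sigma)$ is a simplex'' and the defining simplex condition of the Moore path complex $PL$. The whole argument rests on the observation that a set of vertices $\lbrace \gamma_0,\dots,\gamma_p\rbrace$ spans a simplex of $PL$ exactly when $\lbrace \gamma_0(i),\dots,\gamma_p(i),\gamma_0(i+1),\dots,\gamma_p(i+1)\rbrace$ is a simplex of $L$ for every integer $i$; at the step from time $i$ to time $i+1$ this is literally a union of the type appearing in contiguity. Both implications amount to translating between ``stacking contiguous maps along the time axis'' and ``slicing a Moore path at each integer time''.

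For part (i) I would first treat the basic case $f\backsim_c g$ and then bootstrap to a general contiguity chain. Given $f\backsim_c g$, define $H:K\to PL$ on a vertex $v$ by the Moore path $H(v)(i)=f(v)$ for $i\le 0$ and $H(v)(i)=g(v)$ for $i\ge 1$. This is a genuine Moore path because the only nonconstant step $\lbrace f(v),g(v)\rbrace$ is a simplex by contiguity applied to the vertex $v$. To see that $H$ is simplicial, take a simplex $\sigma=\lbrace v_0,\dots,v_p\rbrace$ of $K$ and check the $PL$ condition at each time $i$: for $i\le -1$ it reduces to $f(\sigma)$, for $i\ge 1$ to $g(\sigma)$, and at $i=0$ exactly to $f(\sigma)\cup g(\sigma)$, all simplices of $L$. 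Then $\alpha_1\circ H=f$ and $\omega_1\circ H=g$, so $f\simeq g$. The key subtlety is that p-homotopy is \emph{not} transitive, so a general chain $f=\phi_0\backsim_c\cdots\backsim_c\phi_n=g$ cannot be handled by composing p-homotopies. Instead I would build a single long Moore path $H(v)$ that visits $\phi_0(v),\phi_1(v),\dots,\phi_n(v)$ at consecutive times and stabilizes to $f(v)$ on the left and $g(v)$ on the right; the Moore and simplicial conditions across the step from time $i$ to $i+1$ follow from $\phi_i\backsim_c\phi_{i+1}$, and the arbitrary-length nature of Moore paths is exactly what makes this one homotopy possible.

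For part (ii) I would reverse the construction by \emph{slicing} a given p-homotopy in time. Let $H:K\to PL$ satisfy $\alpha_1\circ H=f$ and $\omega_1\circ H=g$, and for each integer $i$ define $\phi_i:K\to L$ by $\phi_i(v)=H(v)(i)$. Each $\phi_i$ is simplicial because $\phi_i(\sigma)$ is a face of the simplex forced by the $PL$ condition, and consecutive slices satisfy $\phi_i\backsim_c\phi_{i+1}$ since $\phi_i(\sigma)\cup\phi_{i+1}(\sigma)$ is precisely the set required to be a simplex in the definition of $PL$. Here is where finiteness of $K$ enters and is the main obstacle: I must produce a \emph{uniform} time window $[m,M]$ outside of which every $H(v)$ has already stabilized, so that $\phi_m=f$ and $\phi_M=g$ hold simultaneously for all vertices. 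Since $K$ has finitely many vertices and each $H(v)$ is constant for $i\le H(v)^-$ and for $i\ge H(v)^+$, the choices $m=\min_v H(v)^-$ and $M=\max_v H(v)^+$ work, yielding the finite contiguity chain $f=\phi_m\backsim_c\cdots\backsim_c\phi_M=g$ and hence $f\sim g$. Without finiteness these extrema need not exist, which is exactly why the hypothesis cannot be dropped.
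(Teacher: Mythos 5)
Your argument is correct: the single long Moore path encoding the whole contiguity chain (rather than composing p-homotopies, which would fail since p-homotopy is not transitive) handles (i), and the uniform stabilization window obtained from finiteness of $K$ is exactly the point needed for (ii). The paper itself gives no proof of this statement---it is imported verbatim from \cite[Proposition~5.10]{sim}---and your proof is essentially the standard argument from that reference.
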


There are some close relations between topological complexity and the concept of \v{S}varc genus as presented by Farber \cite{Far}. To study simplicial complexity we need a modified \v{S}varc genus 
adapted to the simplicial setting.

\begin{definition}[\cite{sim}]\label{genus}
Let $\phi : K \to L$ be a simplicial map. 
\begin{enumerate}
\item
The simplicial \v{S}varc genus of  $\phi : K \to L$, denoted by $secat(\phi)$, is the minimum integer $ n \geq 0$ for which $L$ is the union $L_0 \cup ...\cup L_n$ of $n + 1$ subcomplexes, such that for each $j$ there exists a simplicial section $\sigma _j$ of $\phi $; A simplicial map $\sigma _j : L_j \to K$ such that $\phi \circ \sigma _j$ equals the inclusion map $i_j : L_j \hookrightarrow L$. 

\item
The homotopy simplicial \v{S}varc genus of $\phi : K \to L$, denoted by $hsecat(\phi )$, is the minimum integer $ n \geq 0$ such that $L = L_0 \cup ...\cup L_n$, and for each $j \in \lbrace 0,...,n\rbrace $ there exists an “up to contiguity class” simplicial section $\sigma _j$ of $\phi $; A simplicial map $\sigma _j : L_j \to K$ such that $\phi \circ \sigma _j \sim i_j $. 
\end{enumerate}
\end{definition}

Note that if $\phi \circ \sigma _j = i_j $, then $\phi \circ \sigma _j \sim i_j $ and therefore
$hsecat(\phi)\leq secat (\phi )$. The equality holds for some particular classes of maps such as simplicial fibrations.

\begin{theorem}[\cite{sim}]\label{iii}
Let $p:E\to B$ be a simplicial fibration. Then $hsecat(p)=secat(p)$.
\end{theorem}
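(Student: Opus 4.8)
The plan is to establish the two inequalities separately. One direction, namely $hsecat(p) \le secat(p)$, is immediate and in fact holds for any simplicial map: every genuine section $\sigma_j$ with $p \circ \sigma_j = i_j$ also satisfies $p \circ \sigma_j \sim i_j$, so any cover witnessing $secat(p)$ simultaneously witnesses $hsecat(p)$. This was already observed after Definition \ref{genus}. The real content of the theorem is therefore the reverse inequality $secat(p) \le hsecat(p)$, and this is precisely where the fibration hypothesis must be used.

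To prove $secat(p) \le hsecat(p)$, I would start from an optimal cover for the homotopy genus: write $n = hsecat(p)$, so that $B = L_0 \cup \cdots \cup L_n$ and for each $j$ there is a simplicial map $\sigma_j : L_j \to E$ with $p \circ \sigma_j \sim i_j$. The goal is to upgrade each $\sigma_j$ to a genuine section over the \emph{same} subcomplex $L_j$; doing this for every $j$ keeps the cardinality of the cover unchanged and hence yields $secat(p) \le n$.

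The upgrade itself proceeds in two steps. First, I would convert the contiguity datum $p \circ \sigma_j \sim i_j$ into a simplicial homotopy along an interval. By definition there is a chain $p \circ \sigma_j = \phi_0 \sim_c \phi_1 \sim_c \cdots \sim_c \phi_m = i_j$ of contiguous maps $L_j \to B$, and I would assemble these into a single map $H : L_j \prod I_m \to B$ by setting $H(x,k) = \phi_k(x)$ on level $k$. Second, viewing $\sigma_j$ as a map $L_j \prod \lbrace 0 \rbrace \to E$ and noting that $p \circ \sigma_j = H \circ i_0^m$, I would feed the resulting commutative square into the homotopy lifting property of the simplicial fibration $p$ to obtain a lift $\widehat{H} : L_j \prod I_m \to E$ with $\widehat{H} \circ i_0^m = \sigma_j$ and $p \circ \widehat{H} = H$. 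Restricting $\widehat{H}$ to level $m$ then produces the desired map $\sigma_j' : L_j \to E$, $\sigma_j'(x) = \widehat{H}(x,m)$, which satisfies $p \circ \sigma_j'(x) = H(x,m) = i_j(x)$ exactly, so $\sigma_j'$ is a genuine simplicial section over $L_j$.

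The main obstacle, and the step deserving the most care, is verifying that $H$ is genuinely a simplicial map, since every subsequent application of the fibration definition depends on it. This reduces to understanding the simplices of the categorical product $L_j \prod I_m$: any such simplex projects to a simplex $\sigma$ of $L_j$ and to either a vertex $\lbrace k \rbrace$ or an edge $\lbrace k, k+1 \rbrace$ of $I_m$. On a vertex level, $H$ restricts to $\phi_k$, which is simplicial; on an edge level, the image of the simplex is contained in $\phi_k(\sigma) \cup \phi_{k+1}(\sigma)$, which is a simplex of $B$ precisely because $\phi_k \sim_c \phi_{k+1}$, and any subset of a simplex is again a simplex. Thus the contiguity relation supplies exactly the combinatorial input that makes $H$ simplicial, and once this is secured the fibration property delivers the lift and hence the conclusion $secat(p) = hsecat(p)$.
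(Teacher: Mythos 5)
Your proof is correct. Note that the paper does not actually prove Theorem \ref{iii}: it is stated as a recalled result from \cite{sim}, so there is no in-paper proof to compare against. Your argument --- taking the trivial inequality $hsecat(p)\le secat(p)$ for granted, realizing the contiguity chain $p\circ\sigma_j=\phi_0\sim_c\cdots\sim_c\phi_m=i_j$ as a single simplicial map $H:L_j\prod I_m\to B$, and then invoking the homotopy lifting property of the simplicial fibration to replace $\sigma_j$ by the strict section $\widehat H(-,m)$ over the \emph{same} subcomplex $L_j$ --- is the standard proof of this fact, and you correctly isolate the one point that needs care: that $H$ is simplicial because every simplex of the categorical product $L_j\prod I_m$ projects to a vertex or an edge of $I_m$, so its image lies in $\phi_k(\sigma)\cup\phi_{k+1}(\sigma)$, a simplex of $B$ by contiguity.
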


In Definition \ref{de2.2}, discrete topological complexity was introduced by the union of subcompexes called Farber subcomplexes. In \cite{sim},  as a main result it was proved that the discrete topological complexity is equal to the \v{S}varc genus of a specific simplicial fibration.

\begin{theorem}[\cite{sim}]\label{b}
Let $K$ be a finite complex. The discrete topological complexity of $K$ equals the \v{S}varc genus of the simplicial fibration $p=(\alpha_1 , \omega_1 ):PK\to K\prod  K$; That is $TC(K)=secat(p)$.
\end{theorem}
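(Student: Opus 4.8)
\noindent
The plan is to prove the two inequalities $secat(p)\le TC(K)$ and $TC(K)\le secat(p)$ separately, using the fact that $p=(\alpha_1,\omega_1):PK\to K\prod K$ is a simplicial fibration to move freely between $secat(p)$ and $hsecat(p)$. By Theorem \ref{iii} we have $hsecat(p)=secat(p)$, so it suffices to prove $hsecat(p)\le TC(K)$ and $TC(K)\le hsecat(p)$. Conceptually, the argument rests on a dictionary between the two invariants: a Farber section $\sigma\colon\Omega\to K$ with $\Delta\circ\sigma\sim i_\Omega$ assigns to each configuration in $\Omega\subseteq K^2$ a single vertex, while a section-up-to-contiguity of $p$ over $\Omega$ assigns a whole Moore path whose endpoints recover that configuration up to contiguity. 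Passing from a vertex to its constant path, and from a path to its initial vertex, is what translates one cover into the other.

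The key ingredient is that the initial- and final-vertex maps $\alpha_1,\omega_1\colon PK\to K$ are p-homotopic. Indeed, choosing $H=\mathrm{id}_{PK}\colon PK\to PK$ in the definition of p-homotopy yields $\alpha_1\circ H=\alpha_1$ and $\omega_1\circ H=\omega_1$, so $\alpha_1\simeq\omega_1$. Now let $\Omega\subseteq K^2$ be a subcomplex and $s\colon\Omega\to PK$ a simplicial map with $p\circ s=(\alpha_1\circ s,\omega_1\circ s)\sim i_\Omega$. Since p-homotopy is compatible with composition, $\alpha_1\circ s\simeq\omega_1\circ s$; and because $\Omega$ is a subcomplex of the finite complex $K^2$, hence finite, Theorem \ref{5.10}(ii) promotes this to the contiguity-class equality $\alpha_1\circ s\sim\omega_1\circ s$. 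It is essential here that the finiteness hypothesis is applied to the domain $\Omega$ of the composite, not to the (infinite) complex $PK$.

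For $hsecat(p)\le TC(K)$, take an optimal Farber cover $K^2=\Omega_0\cup\cdots\cup\Omega_n$ with simplicial maps $\sigma_j\colon\Omega_j\to K$ satisfying $\Delta\circ\sigma_j\sim i_{\Omega_j}$, and let $c\colon K\to PK$ send a vertex to the constant Moore path at that vertex; one checks directly that $c$ is simplicial and that $p\circ c=\Delta$. Then $s_j:=c\circ\sigma_j\colon\Omega_j\to PK$ satisfies $p\circ s_j=\Delta\circ\sigma_j\sim i_{\Omega_j}$, so the same cover witnesses $hsecat(p)\le n$. Conversely, for $TC(K)\le hsecat(p)$, take an optimal cover $K^2=L_0\cup\cdots\cup L_n$ with sections-up-to-contiguity $s_j\colon L_j\to PK$, and define $\sigma_j:=\alpha_1\circ s_j\colon L_j\to K$. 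Using the key ingredient together with the fact that contiguity classes in the categorical product $K\prod K$ are detected coordinatewise, we obtain $\Delta\circ\sigma_j=(\alpha_1\circ s_j,\,\alpha_1\circ s_j)\sim(\alpha_1\circ s_j,\,\omega_1\circ s_j)=p\circ s_j\sim i_{L_j}$, so $\{L_j\}$ is a valid Farber cover and $TC(K)\le n$. Combining the two inequalities with $hsecat(p)=secat(p)$ gives $TC(K)=secat(p)$.

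The main obstacle is the key ingredient of the second paragraph, namely that $\alpha_1\circ s$ and $\omega_1\circ s$ lie in the same contiguity class. The cleanest route I see runs through the p-homotopy $\alpha_1\simeq\omega_1$ and the finite-domain converse in Theorem \ref{5.10}(ii), and the subtlety is precisely that this converse may only be applied after composing with $s$, since $PK$ itself is not finite. A minor routine verification, needed in both directions, is that $c$ is simplicial and that contiguity (and hence contiguity class) is preserved coordinatewise in $K\prod K$; the latter is immediate from the defining property that a set of vertices spans a simplex of $K\prod K$ exactly when each of its two projections spans a simplex.
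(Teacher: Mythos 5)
Your proof is correct. The paper itself states Theorem~\ref{b} without proof, citing \cite{sim}; your argument --- translating Farber sections into up-to-contiguity sections of $p$ via the constant-path map, and back via $\alpha_1$, using the p-homotopy $\alpha_1\simeq\omega_1$ (witnessed by $\mathrm{id}_{PK}$) together with Theorem~\ref{5.10}(ii) applied on the finite domains, the coordinatewise detection of contiguity in $K\prod K$, and finally $hsecat(p)=secat(p)$ from Theorem~\ref{iii} --- is essentially the same route taken in that reference.
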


Lusternik-Schnirelman category is a valuable tool for studying topological complexity. To gain a better understanding of discrete topological complexity, it would be logical to apply Lusternik-Schnirelman category for simplicial structures as well.

\begin{definition}[\cite{Lus}]\label{de2.12n}
Let $K$ be a simplicial complex. We say that the subcomplex $U\subseteq K$ is categorical if there exists $v\in K$ such that the inclusion map $i:U\hookrightarrow K$ and the constant map $c_v:U\hookrightarrow K$ are in the same contiguity class; $i_U\sim c_v$.
Moreover the simplicial LS-category $scat(K)$ of the simplicial complex $K$ is the least integer $n \geq 0$ such that $K$ can be covered by $n + 1$ categorical subcomplexes. 
\end{definition}

It is important to note that a categorical subcomplex might not be  connected and therefore it might not be categorical in itself (see \cite{Fer}).
The following theorem shows that the LS-category of a simplicial complex equals the \v{S}varc genus of a specific fibration map.

\begin{theorem}[\cite{sim}]\label{th2.13n}
Let $K$ be a connected finite complex and $P_0 K=\lbrace \gamma \in PK :\, \alpha_1 (\gamma )=v_0 \rbrace $. Then the simplicial LS-category $scat(K)$ equals the \v{S}varc genus of the simplicial finite-fibration $\omega_1:P_0 K\to K$; That is $scat(K)=secat(\omega_1)$.
\end{theorem}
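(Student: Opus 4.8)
The plan is to prove the two inequalities $secat(\omega_1)\le scat(K)$ and $scat(K)\le secat(\omega_1)$ separately, following the pattern of Farber's identification of the LS-category with the sectional category of the based path fibration, but recasting every step in terms of contiguity classes and Moore paths. The single fact that powers both directions is the following dictionary: given a subcomplex $U\subseteq K$ and simplicial maps $F_0,F_1,\dots:U\to K$ that eventually stabilise, the rule sending a vertex $u$ to the Moore path $k\mapsto F_k(u)$ defines a simplicial map $U\to PK$ if and only if $F_k\backsim_c F_{k+1}$ for every $k$. Indeed, the defining simplex condition of $PK$ at time $i$, namely that $\{F_i(u_0),\dots,F_i(u_p),F_{i+1}(u_0),\dots,F_{i+1}(u_p)\}$ be a simplex of $K$, is word for word the contiguity of $F_i$ and $F_{i+1}$ tested on the simplex $\{u_0,\dots,u_p\}$. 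I expect this correspondence between contiguity chains and simplicial maps into $PK$ to be the heart of the argument.

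For $secat(\omega_1)\le scat(K)$, I would start from a cover $K=U_0\cup\dots\cup U_n$ by categorical subcomplexes, with $i_{U_j}\sim c_{v_j}$ realised by a contiguity chain $i_{U_j}=\phi_0\backsim_c\dots\backsim_c\phi_m=c_{v_j}$. Reversing the indices gives $\psi_k:=\phi_{m-k}$ with $\psi_0=c_{v_j}$ and $\psi_m=i_{U_j}$, so $k\mapsto\psi_k(u)$ is a Moore path from $v_j$ to $u$. Using connectedness of $K$, I fix once and for all an edge-path $p$ from the basepoint $v_0$ to $v_j$ and set $\sigma_j(u):=p\ast\bigl(k\mapsto\psi_k(u)\bigr)$. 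By the dictionary above, and because the segment $p$ contributes only values that are constant in $u$, each $\sigma_j$ is a simplicial map $U_j\to P_0K$, and by construction $\omega_1\circ\sigma_j=i_{U_j}$ exactly. Thus $\omega_1$ admits honest simplicial sections over the cover $\{U_j\}$, so $secat(\omega_1)\le n=scat(K)$.

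For the reverse inequality, I would take a cover $K=L_0\cup\dots\cup L_n$ together with simplicial sections $\sigma_j:L_j\to P_0K$ satisfying $\omega_1\circ\sigma_j=i_{L_j}$ (by Theorem \ref{iii} one may equally begin from $hsecat$ and only ask $\omega_1\circ\sigma_j\sim i_{L_j}$). Define $F_k:L_j\to K$ by $F_k(u):=[\sigma_j(u)](k)$. The $PK$ simplex condition applied to $\sigma_j$ shows each $F_k$ is simplicial and that $F_k\backsim_c F_{k+1}$ for all $k$. Finiteness of $K$ yields uniform bounds $N^-\le N^+$ with $F_k=c_{v_0}$ for $k\le N^-$ (every path begins at $v_0$) and $F_k=\omega_1\circ\sigma_j$ for $k\ge N^+$ (every path has reached its endpoint). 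The finite contiguity chain $c_{v_0}=F_{N^-}\backsim_c\dots\backsim_c F_{N^+}=\omega_1\circ\sigma_j=i_{L_j}$ then witnesses $i_{L_j}\sim c_{v_0}$, so every $L_j$ is categorical and $scat(K)\le n=secat(\omega_1)$.

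The main obstacle, and the point at which the simplicial argument genuinely diverges from its topological model, is the bookkeeping imposed by the Moore-path formalism: the paths $\sigma_j(u)$ have stabilisation times depending on $u$, so I must invoke finiteness of $K$ to extract the uniform bounds $N^\pm$ before the contiguity chain can be read off, and I must confirm that pre-concatenating the fixed connecting path $p$ in the first direction preserves simpliciality. Once the contiguity/$PK$ dictionary of the first paragraph is in place, both of these are routine verifications and the two inequalities close up to give $scat(K)=secat(\omega_1)$.
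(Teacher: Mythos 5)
The paper does not prove this statement: Theorem \ref{th2.13n} is quoted verbatim from \cite{sim} as a preliminary, so there is no in-paper argument to compare against. Judged on its own, your proof is correct and is the standard one. The contiguity/$PK$ dictionary in your first paragraph is exactly right (the simplex condition defining $PK$, tested on the image of a simplex of $U$, is literally the contiguity of consecutive $F_k$'s restricted to that simplex), and both directions close properly: reversing a contiguity chain $i_{U_j}\sim c_{v_j}$ and prepending a fixed edge-path from $v_0$ to $v_j$ yields a strict section of $\omega_1$ over $U_j$ (the constant-in-$u$ prefix causes no trouble for simpliciality), while conversely the slices $F_k(u)=[\sigma_j(u)](k)$ of a section stabilise uniformly because $L_j$ has finitely many vertices, producing the finite chain $c_{v_0}\sim i_{L_j}$. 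The only points genuinely requiring care --- uniformity of the stabilisation times and the junction simplices in the Moore product --- are the ones you flag and handle.
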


The path space $P(X,Y)$ is equipped with a compatible metric introduced in \cite{Int}. We use this metric to compare the complexity of the simplicial complex and its geometric realization.

\begin{theorem}[\cite{Int}]\label{Int}
Let $X$ be a compact space and $(Y,d)$ a metric space. Then with the compact-open topology, $ P(X,Y)$ is metrizable and the metric  is given by 
\[
e(f,g)=sup \lbrace  d(f(x),g(x)):x\in X \rbrace \quad f,g\in P(X,Y).
\] 
\end{theorem}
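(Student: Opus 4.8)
The plan is to exhibit the compact--open topology $\tau_{co}$ on $P(X,Y)$, interpreting $P(X,Y)$ as the space $C(X,Y)$ of continuous maps $X\to Y$ that the formula for $e$ presupposes, as exactly the topology $\tau_e$ induced by the proposed metric; this establishes metrizability and the explicit formula in one stroke. First I would verify that $e$ is a bona fide metric. Because $X$ is compact and $f,g$ are continuous, the function $x\mapsto d(f(x),g(x))$ is continuous on a compact space, hence bounded and attaining its supremum, so $e(f,g)$ is a well-defined nonnegative real number; symmetry, the triangle inequality, and $e(f,g)=0\iff f=g$ are then inherited pointwise from the corresponding properties of $d$. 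It remains to prove $\tau_{co}=\tau_e$, which I would carry out through the two standard inclusions.

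For $\tau_{co}\subseteq\tau_e$, I take a subbasic set $S(K,U)=\{h : h(K)\subseteq U\}$ with $K\subseteq X$ compact and $U\subseteq Y$ open, and fix $f\in S(K,U)$ (the case $K=\emptyset$ being trivial). The image $f(K)$ is compact and contained in $U$, so the $1$-Lipschitz function $y\mapsto d(y,Y\setminus U)$ is continuous and strictly positive on $f(K)$ and thus attains a positive minimum $\delta$ there (take $\delta=1$ if $U=Y$). If $e(f,g)<\delta$, then $d(f(x),g(x))<\delta\le d(f(x),Y\setminus U)$ for every $x\in K$, forcing $g(x)\in U$; hence the $e$-ball $B_e(f,\delta)\subseteq S(K,U)$, showing that every subbasic compact--open set is $\tau_e$-open.

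For $\tau_e\subseteq\tau_{co}$, I fix $f$ and $\epsilon>0$ and seek a compact--open neighbourhood of $f$ inside $B_e(f,\epsilon)$. By continuity, each $x\in X$ has an open neighbourhood $V_x$ with $f(V_x)\subseteq B_d(f(x),\epsilon/5)$, and compactness of $X$ yields a finite subcover $V_{x_1},\dots,V_{x_n}$. Here I use that $X$ is compact, so each closed set $C_i:=\overline{V_{x_i}}$ is compact, and continuity gives $f(C_i)\subseteq\overline{B_d(f(x_i),\epsilon/5)}\subseteq B_d(f(x_i),\epsilon/4)=:U_i$. Then $N:=\bigcap_{i=1}^{n}S(C_i,U_i)$ is a compact--open neighbourhood of $f$; for any $h\in N$ and any $x\in X$ we have $x\in C_i$ for some $i$, whence $h(x),f(x)\in U_i$ and $d(h(x),f(x))\le\operatorname{diam} U_i\le\epsilon/2<\epsilon$. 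Taking the supremum over $x$ gives $h\in B_e(f,\epsilon)$, so $B_e(f,\epsilon)\in\tau_{co}$; since the $e$-balls form a basis for $\tau_e$, this completes the coincidence of the two topologies.

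The step I expect to be the main obstacle is this second inclusion, specifically the manufacture of \emph{compact} domains $C_i$ that still land in small balls. The tension is that $f$ carries the \emph{open} sets $V_{x_i}$ into small balls, whereas the compact--open topology is phrased through compact sets; passing to the closures $\overline{V_{x_i}}$ restores compactness (exploiting that closed subsets of the compact space $X$ are compact) at the price of enlarging the target, which is precisely why I separate the radii as $\epsilon/5<\epsilon/4$ so that the closed $\epsilon/5$-ball still sits inside the open $\epsilon/4$-ball while keeping the oscillation estimate $\operatorname{diam} U_i\le\epsilon/2$ strictly below $\epsilon$.
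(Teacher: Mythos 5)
Your argument is correct. Note, however, that the paper does not prove this statement at all: Theorem~\ref{Int} is imported verbatim from the cited reference \cite{Int} (Joshi's textbook) as a background fact, so there is no in-paper proof to compare against. What you have written is the standard two-inclusion argument identifying the compact--open topology with the topology of uniform convergence when the domain is compact, and each step checks out: the distance-to-complement trick gives $\tau_{co}\subseteq\tau_e$, and the passage to closures of the finite subcover (with the careful radius bookkeeping $\epsilon/5<\epsilon/4$) handles $\tau_e\subseteq\tau_{co}$; since your construction works at an arbitrary center $f$, the basis remark at the end does legitimately finish the inclusion. You also correctly resolved the notational clash, reading the $P(X,Y)$ of the statement as the mapping space $C(X,Y)$ rather than the paper's path space, which is the only reading under which the formula for $e$ makes sense.
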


To compare topological and simplicial complexity, we need to recall some propositions, such as Theorem \ref{j} recalled from \cite[Proposition 6.14]{com}. Note that in general, the converse statement of theorem \ref{j} does not hold; See \cite[Section 6.5]{com}. A simplicial complex $K$ is strongly collapsible if it is strongly equivalent to a point, or equivalently, the identity map $id_K$ is in the contiguity class of constant map $c_v: K\to K$ for some vertex $v \in K$. In other words, a complex is strongly collapsible if it is categorical in itself.

\begin{theorem}[\cite{com}]\label{j}
If simplicial complex $K$ is strongly collapsible, then its geometric realization $|K|$ is contractible.
\end{theorem}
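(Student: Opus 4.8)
\textbf{Proof proposal for Theorem \ref{j}.}

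The plan is to establish contractibility of the geometric realization $|K|$ directly from the combinatorial hypothesis that the identity map $id_K$ lies in the same contiguity class as a constant map $c_v$ for some vertex $v \in K$. The bridge between the simplicial world and the topological world will be the standard fact that the geometric realization functor $|\cdot|$ sends contiguous simplicial maps to homotopic continuous maps. Thus the whole argument reduces to promoting the combinatorial ``sameness up to contiguity'' to a topological ``sameness up to homotopy,'' and then reading off contractibility.

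First I would unwind the hypothesis: since $K$ is strongly collapsible, by definition there is a finite chain of contiguous simplicial maps
\[
id_K = \phi_0 \backsim_c \phi_1 \backsim_c \cdots \backsim_c \phi_n = c_v .
\]
Second, I would invoke the key lemma that realization turns contiguity into homotopy: if $\phi, \psi : K \to K$ are contiguous, then the induced maps $|\phi|, |\psi| : |K| \to |K|$ are homotopic. The standard proof of this lemma builds an explicit straight-line homotopy on each simplex; because $\phi(\sigma) \cup \psi(\sigma)$ is a simplex of $K$ for every $\sigma$, the affine interpolation $t \mapsto (1-t)|\phi|(x) + t|\psi|(x)$ stays inside the realized simplex spanned by $\phi(\sigma)\cup\psi(\sigma)$, so it defines a genuine homotopy in $|K|$. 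Applying this lemma to each consecutive pair $\phi_i \backsim_c \phi_{i+1}$ and composing the resulting homotopies along the chain yields
\[
id_{|K|} = |id_K| \simeq |c_v| = c_{|v|},
\]
where $c_{|v|}$ is the constant map at the vertex $v$ regarded as a point of $|K|$. Finally, a space whose identity map is homotopic to a constant map is by definition contractible, which gives the conclusion that $|K|$ is contractible.

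The main obstacle, and the only genuinely non-formal step, is the contiguity-to-homotopy lemma itself; everything else is bookkeeping (chaining homotopies, unwinding definitions). I expect the paper either cites this lemma from the literature on simplicial homotopy or regards it as well known, since it is the standard mechanism underlying the fact that contiguity is the combinatorial shadow of homotopy. One subtlety worth flagging is that the affine homotopies must be verified to be continuous and to agree on shared faces so that they glue to a well-defined homotopy on all of $|K|$; this is routine but is where care is needed. Note also that, as the paper already emphasizes, only one direction is claimed: contractibility of $|K|$ does not force strong collapsibility of $K$, so no attempt at a converse is needed here.
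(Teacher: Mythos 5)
Your proposal is correct and is exactly the standard argument behind this result, which the paper does not prove itself but simply cites from Kozlov's \emph{Combinatorial Algebraic Topology}: contiguous maps have homotopic realizations via the straight-line homotopy inside the simplex spanned by $\phi(\sigma)\cup\psi(\sigma)$, and chaining along the contiguity class from $id_K$ to $c_v$ gives $id_{|K|}\simeq$ const. No gaps; the continuity/gluing caveat you flag is routine for the (finite) complexes considered here.
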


\section{Targeted Simplicial Complexity}

R. Short introduced the relative topological complexity $TC(X,Y)$ for the pair $(X,Y)$ in \cite{rel}, which helps  to reduce $TC(X)$. We now aim to modify this invariant in the discrete version of topological complexity.
Consider $L\subseteq K$ and $\Omega \subseteq K\prod L$, together with the inclusion map $i_\Omega: \Omega \to K\prod L$. Additionally, let $\Delta_L: L \to K\prod L$ represent the restriction of  diagonal map, defined as $\Delta_L(v) = (v,v)$.

\begin{definition}\label{def}
The targeted simplicial complexity $TC(K,L)$, is the least integer $n\geq 0$ such that one can cover $K\prod  L$ by $n+1$ simplicial subcomplexes $\Omega _0,...,\Omega _n$, so that for any $i=0,...,n$ there exists a simplicial map $\sigma _i:\Omega _i \subseteq K\prod  L\to L$ with $\Delta _L \circ \sigma _i\sim i_{\Omega _i}$.
\end{definition}

By comparing Definition \ref{def} with the definition of discrete topological complexity, Definition \ref{de2.2n}, one can see the following advantages helping the programmer for planning the algorithm of robot motion:
\begin{enumerate}
\item
The simplicial complex $K \prod L$ is smaller than  the categorical product $K^2 = K \prod K$, and then it is naturally easier to compute and study.

\item
Since the complex $K \prod L$ is smaller than $K^2$, fewer number of subcomplexes satisfying Definition \ref{def} are needed to cover $K \prod L$ instead of $K^2$.

\item
Since the number of $\Omega$'s covering $K \prod L$ is smaller than the subcomplexes covering $K^2$, the number of rules needed for the algorithm planning the motion in $K$ is less than the number of rules required for the discrete case.
\end{enumerate}

In \cite{rel}, topological invariant $TC(X,Y)$ was introduced to solve the following motion planning problem: Imagine a robot with configuration space $X$. The objective is to create a motion planning algorithm on $X$ where the paths terminate in $Y$. Here, we demonstrate that our concept of targeted simplicial complexity can effectively describe  motion planning on a simplicial complex $K$ such that the robot's motion ends in a specified subcomplex $L\subseteq K$. To verify, we consider some subcomplex $\Omega$ satisfying Definition \ref{def}.

\paragraph*{Motion planning}\label{Motion}

Let $\Omega \subseteq K\prod  L$ and let $\sigma :\Omega \to L$ be a map such that $\Delta \circ \sigma \sim i_\Omega $. For each pair of points $x\in K$ and $y\in L$, if  $(x,y)\in \Omega $, then the point $\sigma (x,y)$ is an intermediate point of a path from $x$ to $y$, as shown below.
Since $\Delta \circ \sigma \sim i_\Omega $, there exist $h_1,...,h_{m-1}:\Omega \to K\prod  L$ such that $\Delta \circ \sigma =h_0\sim _c ...\sim _c h_j \sim _c ... \sim _c h_m=i_\Omega $. Put $(x_j,y_j)=h_j(x,y)$, and then $x_m=x$, $y_m=y$ and $x_0=\sigma (x,y)=y_0$. Hence we have the following sequence of points:
\begin{align}\label{1}
x=x_m,...,x_0=\sigma (x,y)=y_0,...,y_m=y.
\end{align}
Note that contiguous relationship implies that two consecutive points in the sequence
(\ref{1}) 
belong to the same simplex. Since $h_j \sim _ch_{j+1}$, the points $h_j(x,y)=(x_j,y_j)$ and $h_{j+1}(x,y)=(x_{j+1},y_{j+1})$ generate a simplex of $K\prod  L$.
Then by definition of the categorical product $K\prod  L$, the points $x_j$ and $x_{j+1}$ generate a simplex of $K$ and  $y_j$ and $y_{j+1}$ generate a simplex of $L$. Therefore we obtain a sequence of points \eqref{1} giving an edge-path on $K$ connecting the point $x$ to $y$.
Note that not only $y$ belongs to $L$ but also all the points $y_0$, ...,$y_m=y$ in sequence (\ref{1}) belong to the subcomplex $L$. 

Now we proceed to adapt the definition of $PK$ and define the map $\pi$.
Let $K$ be a simplicial complex and $L\subseteq K$ be a subcomplex. We define $P(K,L)=\lbrace \gamma \in PK : \omega_1 (\gamma)\in L\rbrace $ and $\pi :P(K,L)\to K\prod  L$ by the rule $\pi (\gamma)=(\alpha_1 (\gamma),\omega (\gamma))$, where $\omega$ is the map obtained from $\omega_1$ whenever the codomain is restricted to $L$. 
Proposition \ref{P} presents a remarkable  equality  for $TC(X,Y)$. This idea helps us to find an equivalent expression for the targeted topological complexity in discrete version $TC(K,L)$. 

\begin{proposition}\label{P}
Let $X$ be a topological space and $Y \subseteq X$.
Then the path fibration map $\pi$ and diagonal map $\Delta _{Y}$ have the same homotopic \v{S}varc genus, and both coincide with the topological complexity of $(X,Y)$; That is
\[
TC(X,Y) =secat(\pi) =hsecat(\pi) = hsecat(\Delta _{Y}).
\]
\end{proposition}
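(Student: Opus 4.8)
The plan is to read the four-fold equality as a chain and verify each link separately, since two of the three equalities are essentially formal. The leftmost equality $TC(X,Y)=secat(\pi)$ holds by definition: Definition \ref{de2.2n} literally defines $TC(X,Y)$ as the \v{S}varc genus of $\pi:P(X,Y)\to X\times Y$. The equality $secat(\pi)=hsecat(\pi)$ follows at once from the remark (citing \cite{Sch}) that $hsecat(f)=secat(f)$ whenever $f$ is a fibration, together with the fact recorded in Definition \ref{de2.2n} that $\pi$ is a fibration. Hence the entire substance of the proof is the last equality $hsecat(\pi)=hsecat(\Delta_Y)$.

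To handle $hsecat(\pi)=hsecat(\Delta_Y)$, I would introduce the map $j:Y\to P(X,Y)$ sending a point $y$ to the constant path $c_y$ at $y$; this lands in $P(X,Y)$ because the constant path at $y\in Y$ trivially terminates in $Y$. One checks immediately that $\pi\circ j=\Delta_Y$, since $\pi(c_y)=(c_y(0),c_y(1))=(y,y)=\Delta_Y(y)$. Thus $\pi$ and $\Delta_Y$ are two maps into the common base $X\times Y$ related through $j$, and it remains only to show that $j$ is a homotopy equivalence and that $hsecat$ is insensitive to precomposition of a fibration with a homotopy equivalence over the base.

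For the homotopy equivalence the natural candidate for a homotopy inverse is the endpoint evaluation $ev_1:P(X,Y)\to Y$, $\gamma\mapsto\gamma(1)$, for which $ev_1\circ j=\mathrm{id}_Y$ is clear. For the other composite I would write down the explicit deformation $H_t(\gamma)(s)=\gamma((1-t)s+t)$, which contracts each path to the constant path at its endpoint and satisfies $H_0=\mathrm{id}$ and $H_1=j\circ ev_1$. The crucial verification is that this homotopy stays inside $P(X,Y)$, i.e.\ that $H_t(\gamma)(1)=\gamma(1)\in Y$ for every $t$, so the endpoint never leaves the target subspace $Y$; this gives $j\circ ev_1\simeq\mathrm{id}_{P(X,Y)}$ and hence that $j$ is a homotopy equivalence. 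Finally I would invoke the invariance statement: if $f:E\to B$ is a fibration, $g:E'\to B$ is any map, and $h:E'\to E$ is a homotopy equivalence with $f\circ h=g$, then $hsecat(f)=hsecat(g)$. One inequality is obtained by pushing any family of local sections $t_j$ of $g$ forward to $h\circ t_j$, since $f\circ(h\circ t_j)=g\circ t_j\simeq i_j$; the reverse inequality uses a homotopy inverse $h'$ of $h$ and the relation $g\circ h'\simeq f$ to transport sections back. Applying this with $f=\pi$, $g=\Delta_Y$, $h=j$ closes the chain.

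The main obstacle I anticipate is not a single deep step but the bookkeeping in the invariance lemma: keeping careful track of which compositions are strict equalities and which are only homotopies, and confirming that local sections defined over the open cover of $X\times Y$ transport correctly under $j$ and $ev_1$. The one genuinely geometric point deserving care is that the contracting homotopy $H_t$ must respect the target condition throughout, which is exactly what forces the use of $ev_1$ (evaluation at the terminal point) rather than $ev_0$, and which is the feature distinguishing this relative setting from the classical absolute computation $TC(X)=hsecat(\Delta_X)$.
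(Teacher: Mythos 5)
Your proposal is correct and follows essentially the same route as the paper: the first two equalities by definition and the fibration property, then the homotopy equivalence $Y\simeq P(X,Y)$ via the constant-path map and endpoint evaluation, the contracting homotopy fixing the terminal point, and transport of local homotopy sections across the homotopy-commutative triangle. The only difference is one of detail: the paper verifies explicitly (via the compact-open topology) that the contracting homotopy is continuous, a step you correctly identify but leave as a routine check.
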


\begin{proof}
Definition \ref{de2.2} implies the first equality $TC(X,Y) =secat(\pi)$. Since $\pi$ is a fibration map 
$secat(\pi) =hsecat(\pi)$.
It remains to show that  $hsecat(\Delta _{Y})=hsecat(\pi)$.  We conclude it from the homotopy equivalence $Y\simeq P(X,Y)$  commuting the following diagram up to homotopy
\[
\xymatrix{
Y \ar@<0.5ex>[r]^-{C} \ar[d]_-{\Delta _{Y}}  &P(X,Y) \ar@<0.5ex>[l]^-{\omega } \ar[dl]^-{\pi} \\
X\times Y
}
\]
where $C(y)=c_y$, the constant path at $y$, and $\omega (\gamma)=\gamma (1)$.
Two maps $C$ and $\omega$ are homotopy equivalences; That is $C:Y\simeq P(X,Y)$ as shown below. We have $\omega \circ C(y)=\omega (c_y)=c_y(1)=y=id_Y(y)$. Now we want to show $C \circ \omega \simeq id_{P(X,Y)}$. For this aim define map $F:P(X,Y)\times I\to P(X,Y)$ by $F(\gamma , t)(s)=\gamma ((1-s)t+s)$. We prove that this map is continuous. Let $(K,U)$ be the set of all paths $\gamma $ in $P(X,Y)$ so that $\gamma (K)\subseteq U$ where $K$ is a compact subset of $I$ and $U$ is an open set in $X$. We have 
\[
F^{-1}(K,U)=\lbrace (\gamma ,t):\gamma ((1-s)t+s)\in U;\forall s\in K \rbrace .
\]
Suppose that $(\gamma_0,l_0)\in F^{-1}(K,U)$, then $\gamma _0((1-s)l_0+s)\in U$ for every $s\in K$. It means that $(1-s)l_0+s\in \gamma _0^{-1}(U)$. We know that $\gamma _0^{-1}(U)$ is an open set in $I$. So for any 
$s\in K$ there exists an open set $V_s$ in $I$ such that $(1-s)l_0+s\in V_s \subseteq \gamma_0^{-1}(U)$. Let $f:I\times I\to I$ be the map defined by $f(s,l)=(1-s)l+s$. For any $s\in K$, we have $(s,l_0)\in f^{-1}(V_s)$ and since $f$ is continuous, there exists an open set $J_s \times J^{'}_s$ in $I\times I$ such that $(s,l_0)\in J_s \times J^{'}_s \subseteq f^{-1}(V_s)$. Now $K\subseteq \bigcup _{s\in K}J_s$ and on the other hand $K$ is compact. Therefore there exist elements $s_1,...,s_m\in K$ such that $K\subseteq \bigcup _{i=1}^{m}J_{s_i}$. Put $J{'}=\bigcap _{i=1}^{m}J{'}_{s_i}$. Then for each $s\in K$ and $l\in J{'}$, we have $(1-s)l+s\in \gamma _0^{-1}(U)$. Consider open interval $I^{'}$ so that $l_0 \in I'$ and $\overline{I^{'}}\subseteq J{'}$. We define a compact set $K{'}$ as fallows
\[
K^{'} =\lbrace (1-r)t^{'} +r: r\in K \text{ and } t^{'} \in \overline{I^{'}}\rbrace .
\]
One can see that $(\gamma _0,l_0)\in (K^{'},U)\times I^{'}$ and $(K^{'},U)\times I^{'}\subseteq F^{-1}(K,U)$.
Also the map $F$ satisfies $F(\gamma ,0)=id_{P(X,Y)}(\gamma )$ and $F(\gamma ,1)=C \circ \omega (\gamma )$.
Now we prove that the diagram above  is commutative up to homotopy. We have $\pi \circ C(y)=\pi (c_y)=(y,y)=\Delta _Y(y)$. It is enough to show that $\Delta _Y \circ \omega \simeq \pi $. Define map $H:P(X,Y)\times I\to X\times Y$ by $H(\gamma , t)=(\gamma (t),\gamma (1))$. This map is continuous, because  its associate equals the inclusion map.  
Also the map $H$ satisfies $H(\gamma , 0)=\pi (\gamma )$ and $H(\gamma ,1)=\Delta _Y\circ \omega (\gamma )$. Since $\omega$ is a homotopy equivalence and the diagram is commutative up to homotopy,  any homotopy section of $\pi$  can be corresponded to a homotopy section of $\Delta_Y$. Again since $C$ is a homotopy equivalence, any homotopy section of $\Delta_Y$  can be corresponded to a homotopy section of $\pi$.
Therefore $hsecat (\pi) = hsecat (\Delta_Y)$. 
\end{proof}

Definition \ref{de2.2n} introduced $TC(X,Y)$ as the homotopy \v{S}varc genus of the path fibration map from the path space $P(X, Y)$ to $X \times Y$.  Then in Theorem  \ref{P}, we prove that $TC(X,Y)$ equals the homotopy \v{S}varc genus of the diagonal map. For the simplicial version, we are going the opposite way. In Definition \ref{def}, we  define $TC(K,L)$ as the homotopy \v{S}varc genus of the diagonal map.
Now we intend to prove that our definition
 of $TC(K,L)$ coincides with the homotopy \v{S}varc genus of the path simplicial fibration $\pi: P(K,L) \to K \prod L$.
This equality helps us to investigate properties of the targeted simplicial complexity. 
 
 \begin{theorem}\label{i}
The path simplicial map $\pi: P(K,L) \to K  \prod L$ and the diagonal map $\Delta _{L}: L \to K \prod L$ have the same homotopy \v{S}varc genus. Moreover, if $K$ is a finite complex and $L$ be a subcomplex of $K$, then
\[
TC(K,L)  = hsecat(\Delta _{K})=hsecat (\pi) = secat(\pi).
\]
\end{theorem}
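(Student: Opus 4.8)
The plan is to transport the argument of Proposition \ref{P} into the simplicial category, with contiguity classes $\sim$ and p-homotopies $\simeq$ playing the roles of homotopies. Two of the asserted equalities are essentially formal. First, $TC(K,L) = hsecat(\Delta_L)$ holds by definition: the data in Definition \ref{def} --- a cover of $K \prod L$ by subcomplexes $\Omega_i$ together with simplicial maps $\sigma_i : \Omega_i \to L$ satisfying $\Delta_L \circ \sigma_i \sim i_{\Omega_i}$ --- is exactly the data computing $hsecat(\Delta_L)$ in Definition \ref{genus}(2). Second, $hsecat(\pi) = secat(\pi)$ will follow from Theorem \ref{iii} once $\pi$ is shown to be a simplicial fibration; I would deduce this by restriction from the known fibration $p = (\alpha_1,\omega_1): PK \to K \prod K$, observing that $P(K,L) = p^{-1}(K \prod L)$ and that any lift of a diagram whose base map lands in $K \prod L$ automatically has terminal vertex in $L$, hence lands in $P(K,L)$. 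The substance of the theorem is therefore the middle equality $hsecat(\pi) = hsecat(\Delta_L)$.

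To prove it I would reproduce the diagram of Proposition \ref{P} with the constant-path map $C : L \to P(K,L)$, $C(v) = c_v$, and the terminal-vertex map $\omega : P(K,L) \to L$, $\omega(\gamma) = \omega_1(\gamma)$. One verifies the exact identities $\omega \circ C = id_L$ and $\pi \circ C = \Delta_L$, and the two p-homotopies $C \circ \omega \simeq id_{P(K,L)}$ and $\Delta_L \circ \omega \simeq \pi$, making $C$ and $\omega$ mutually inverse p-homotopy equivalences over $K \prod L$. The first p-homotopy is the discrete analogue of the contraction $F(\gamma,t)(s) = \gamma((1-s)t+s)$: a simplicial map sending each Moore path to the deformation that slides its initial vertex forward to the terminal vertex, with $\alpha_1$ returning the identity and $\omega_1$ returning $C\circ\omega$. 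The second is realized by $\gamma \mapsto [\,i \mapsto (\gamma(i),\omega_1(\gamma))\,]$, a Moore path in $K\prod L$ from $\pi(\gamma) = (\alpha_1(\gamma),\omega_1(\gamma))$ to $\Delta_L\circ\omega(\gamma) = (\omega_1(\gamma),\omega_1(\gamma))$. The section transfer is then direct: given an ``up to contiguity'' section $\sigma_i$ of $\Delta_L$, the map $C\circ\sigma_i$ is a section of $\pi$ because $\pi\circ C\circ\sigma_i = \Delta_L\circ\sigma_i \sim i_{\Omega_i}$; conversely $\omega\circ s_i$ is a section of $\Delta_L$ because $\Delta_L\circ\omega\circ s_i \sim \pi\circ s_i \sim i_{\Omega_i}$, where the first relation uses $\Delta_L\circ\omega \simeq \pi$ together with Theorem \ref{5.10}(ii) to convert the p-homotopy into a contiguity class in the finite case. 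This gives both inequalities, hence $hsecat(\pi) = hsecat(\Delta_L)$, and concatenating with the two formal equalities yields the full chain.

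The main obstacle I anticipate is verifying that the two deformations above are honest simplicial maps into $P(P(K,L))$ and $P(K\prod L)$, respectively. Where Proposition \ref{P} needed the continuity of $F$ (proved there by a compactness argument), the simplicial version needs the purely combinatorial fact that the vertex-level assignments carry simplices to simplices: for each simplex of $P(K,L)$ one must check, using the defining condition of a simplex in a path complex and of the categorical product $K\prod L$, that consecutive stages of the forward slide, and the pairs $(\gamma(i),\omega_1(\gamma))$, again span simplices. A secondary point requiring care is that every stage of both homotopies keeps its terminal vertex inside $L$, so that the deformations stay within $P(K,L)$ and $K\prod L$ and do not leak into $PK$ or $K\prod K$; this is what makes the restricted maps $\omega$ and $\pi$, rather than their unrestricted versions $\omega_1$ and $p$, the correct objects throughout.
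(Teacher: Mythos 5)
Your proposal follows essentially the same route as the paper's proof: establishing that $\pi$ is a simplicial fibration as the restriction/pullback of $p=(\alpha_1,\omega_1)$ along the inclusion $K\prod L\hookrightarrow K\prod K$ (the paper phrases this via the pullback $(K\prod L)\prod_{K\prod K}PK$ and cites the pullback-of-fibrations result), then building the p-homotopy equivalence $C,\omega$ over $K\prod L$ with exactly the same sliding homotopy $H(\gamma)(i)(j)=\gamma^{i}(j)$ and the map $\gamma\mapsto(\gamma,C\circ\omega(\gamma))$, and transferring sections across the commuting triangle using Theorem \ref{5.10}(ii). The combinatorial verification that these deformations are simplicial, which you flag as the main obstacle, is carried out in the paper by the two-case check $j\geq i$ versus $j\leq i$ and goes through as you anticipate.
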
 

\begin{proof}
First we show that the simplicial map $\pi: P(K,L) \to K  \prod L$ is a simplicial fibration. To prove we consider a simplicial complex and a simplicial map equivalent to $P(K,L)$ and $\pi$ respectively. 
Suppose that $p:PK\to K\prod  K$ is the usual simplicial fibration, and let us consider $p^{'}$ as its pullback which is induced by the inclusion map $f:K\prod  L\to K\prod  K$. We have the following commutative diagram where $f^{'}$ and $p^{'}$ are projections.

\[\xymatrix{
(K\prod  L)\prod _{K\prod  K}PK \ar[d]_-{p^{'}} \ar[r]^-{f^{'}} & PK \ar[d]^-{p}\\
K\prod  L  \ar[r]_-{f} & K\prod  K\\ 
}
\]
The following simplicial complex is equivalent to path complex $P(K,L)$;
\begin{align*}
\textstyle(K\prod  L)\prod_{K\prod  K}PK&=\lbrace  ((v,v^{'}),\gamma) \in \textstyle (K\prod  L) \textstyle \prod  PK :p(\gamma)=f(v,v^{'}) \rbrace \\
&=\lbrace ((v,v^{'}),\gamma) \in \textstyle(K\prod  L) \textstyle \prod  PK:(\alpha(\gamma),\omega (\gamma))=(v,v^{'})   \rbrace \\
&=\lbrace ((v,v^{'}),\gamma) \in \textstyle(K\prod  L) \textstyle\prod PK: \alpha(\gamma)=v  \ \text{and} \  \omega (\gamma) =v^{'} \rbrace.
\end{align*}
Hence one can replace the path complex 
$(K\prod  L)\prod_{K\prod  K}PK$ with $P(K,L)$ and 
 the map $\pi$ with the pullback of $p$ induced by the inclusion map $f$; That is $p^{'} =\pi$.
Now by \cite[Proposition 4.2]{sim}, any pullback of a simplicial fibration map is also a simplcial fibration, and then we can conclude that the map $\pi :P(K,L)\to K\prod  L$ is a simplicial fibration.
Therefore, $secat (\pi) = hsecat (\pi)$ by Theorem \ref{iii}.

At the second step by a similar argument as Proposition \ref{P}, we show that $hsecat (\pi) = hsecat (\Delta_L)$. We offer a p-homotopy equivalence 
$L\simeq P(K,L)$ 
such that the following diagram  commutes up to p-homotopy
\[
\xymatrix{
L \ar@<0.5ex>[r]^-{C} \ar[d]_-{\Delta _{L}}  &P(K,L) \ar@<0.5ex>[l]^-{\omega} \ar[dl]^-{\pi} \\
K\prod  L
}
\]
where
$C(v)=c_{v}$
and
$\omega(\gamma)=\gamma (\gamma^+)$.
It suffices to show that $\omega \circ C\simeq id_{L}$ and $C\circ \omega\simeq id_{P(K,L)}$.
Obviously $\omega \circ C = id_L$, because $\omega \circ C(v)=\omega(c_{v})=\omega (c_{v})=v=id_{L}(v)$. We define p-homotopy $H:P(K,L)\to P(P(K,L))$ by 
\begin{equation}\label{eq3.1n}
H(\gamma)(i)(j)=\gamma^{i}(j)=
\begin{cases} 
\gamma (j); & i\leq j\\
\gamma (i); & i\geq j
 \end{cases}.
\end{equation}
First we prove that $H$ is a simplicial map.
Assume that $\lbrace \gamma _{0}, ...,\gamma _{p}  \rbrace\in P(K,L)$ is a simplex in $P(K,L)$. Then by definition of a path complex, the set
$
\lbrace \gamma _{0}(i),...,\gamma _{p}(i),\gamma _{0}(i+1),...,\gamma _{p}(i+1)   \rbrace$
is a simplex in $K$ for any $i\in \mathbb{Z}$.
We have to show that $H(\lbrace \gamma _{0}, ...,\gamma _{p} \rbrace )=\lbrace H(\gamma _{0}),...,H(\gamma _{p}) \rbrace $ is a simplex in $P(P(K,L))$ or equivalently
\[
\lbrace H(\gamma _{0})(i),...,H(\gamma _{p})(i),H(\gamma _{0})(i+1),...,H(\gamma _{p})(i+1)   \rbrace \in P(K,L); \text{  for all } i\in \mathbb{Z}.
\]
By the notation of rule \eqref{eq3.1n}, it is equivalent to show that
$\lbrace {\gamma _{0}}^{i},...,{\gamma _{p}}^{i},{\gamma _{0}}^{i+1},...,{\gamma _{p}}^{i+1} \rbrace \in P(K,L)$  for all $i\in \mathbb{Z}$. By the definition of path complex, it suffices to verify that for any $ j\in \mathbb{Z}$,
the set $S$ considered below is a simplex in $K$
\[
\{ \gamma _{0}^{i}(j),...,{\gamma _{p}}^{i}(j),{\gamma _{0}}^{i+1}(j),...,{\gamma _{p}}^{i+1}(j) ,\gamma _{0}^{i}(j+1),...,{\gamma _{p}}^{i}(j+1),{\gamma _{0}}^{i+1}(j+1),...,{\gamma _{p}}^{i+1}(j+1)\}.
 \]
 Suppose that $j\in \mathbb{Z}$ and $0\leq k\leq p$, There exist two cases:
 \begin{enumerate}
 \item[(i)]
 If $j\geq i$, then we have 
 \begin{align*}
 {\gamma _{k}}^{i}(j)&=\gamma _{k}(j);\quad {\gamma _{k}}^{i}(j+1)=\gamma _{k}(j+1)\\
 {\gamma _{k}}^{i+1}(j)&=\gamma _{k}(j);\quad {\gamma _{k}}^{i+1}(j+1)=\gamma _{k}(j+1),
 \end{align*}
and therefore 
$
S=\lbrace \gamma _{0}(j),...,\gamma _{p}(j),\gamma _{0}(j+1),...,\gamma _{p}(j+1)   \rbrace$ being a simplex as desired.
\item[(ii)]
If $j\leq i$, then we have 
\begin{align*}
 {\gamma _{k}}^{i}(j)&=\gamma _{k}(i);\quad {\gamma _{k}}^{i}(j+1)=\gamma _{k}(i)\\
 {\gamma _{k}}^{i+1}(j)&=\gamma _{k}(i+1);\quad {\gamma _{k}}^{i+1}(j+1)=\gamma _{k}(i+1),
 \end{align*}
and hence 
$
S=\lbrace \gamma _{0}(i),...,\gamma _{p}(i),\gamma _{0}(i+1),...,\gamma _{p}(i+1)   \rbrace$ making a simplex as needed.
 \end{enumerate}
Since $S$ is a simplex in $K$, for two cases verified above, $H$ is a simplicial map.
Now we show that $H$ is a p-homotopy map.
By the definition of Moore path and map $\gamma ^{i}$, we have ${\gamma^{i}}^{-}\geq i$,${\gamma^{i}}^{+}\geq i$, $\gamma ^{+}\leq {\gamma^{i}}^{+}$.
Then for each $\gamma \in P(K,L)$ and $i\in \mathbb{Z}$, we have $\alpha \circ H ( \gamma) (i)=\alpha \circ \gamma ^{i}=\gamma ^{i}({\gamma ^{i}}^{-})=\gamma ^i (i)=\gamma (i) = id_{P(K,L)} (\gamma) (i)$ and $\omega \circ H (\gamma) (i)=\omega \circ \gamma ^{i}=\gamma ^{i}({\gamma^{i}}^{+})=\gamma ({\gamma ^i}^{+})= \gamma (\gamma ^{+})=\omega (\gamma )=C\circ \omega(\gamma )(i)$.\\
Now we check that the diagram is commutative up to p-homotopy. We have 
$\pi \circ C(v)=\pi (c_{v})=(v,v)=\Delta _{L}(v)$. Also define p-homotopy $F:P(K,L)\to P(K\prod L)$ by $F(\gamma )=(\gamma ,C\circ \omega (\gamma))$. This map is 
 simplicial because $\gamma $ and $C \circ \omega (\gamma)$ are simplicial maps. Moreover, for any $\gamma \in P(K,L)$, we have 
$\alpha \circ F(\gamma )=(\alpha (\gamma ), \alpha (C\circ \omega (\gamma )))=(\alpha (\gamma ),\alpha (c_{\omega (\gamma )}))=(\alpha (\gamma ), \omega (\gamma ))=\pi (\gamma)$ and $\omega \circ F(\gamma )=(\omega (\gamma ),\omega (C\circ \omega (\gamma )))=(\omega (\gamma ),\omega (c_{\omega (\gamma )}))=(\omega (\gamma ),\omega (\gamma ))=\Delta _{L} \circ \omega(\gamma )$. 
Therefore $F: \Delta _{L}\circ \omega\simeq \pi$ and then by the commutative diagram $hsecat (\pi) = hsecat (\Delta_L)$.
Since $\pi$ is a simplicial fibration $hsecat (\pi) = secat (\pi)$, and hence we conclude the sequence of equalities.
 \end{proof}

%
%

Note that by the commutative diagram in the proof of Theorem \ref{i}, we can conclude that any motion planner described by Definition \ref{def}, leads to a motion planner using the new definition of $TC(K,L)$ in Theorem \ref{i} and vice versa. 

Short proved that $TC(X,Y)\leq TC(X)$ in \cite{rel}.  
We achieve a similar result for the pair of simplicial complexes $(K,L)$, and then we found $TC(K)$ as an upper bound for $TC(K,L)$ by the proposition stated below. Proposition \ref{a} proves that the \v{S}varc genus of a simplicial fibration  is always greater than  the \v{S}varc genus of any pullback by which induced.

\begin{proposition}\label{a} 
Let $p:L\to K$ be a simplicial fibration and let $f:K^{'}\to K$ be a simplicial map. If $p^{'}:L\prod _{K}K^{'} \to K^{'}$ is the pullback fibration over $K^{'}$ induced by $f$, then, $secat (p^{'})\leq secat (p)$.
\end{proposition}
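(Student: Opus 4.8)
The plan is to pull the cover witnessing $secat(p)$ back along $f$ and to transport the sections through the projection of the pullback. Notably, this argument works at the level of strict sections and does not really use the fibration hypothesis on $p$: that hypothesis serves only to guarantee, via \cite[Proposition 4.2]{sim}, that $p^{'}$ is again a simplicial fibration, so that $secat(p^{'})$ is the relevant invariant. Recall from the proof of Theorem \ref{i} the concrete description $L\prod_K K^{'}=\lbrace (x,z)\in L\prod K^{'} : p(x)=f(z)\rbrace$, with projections $f^{'}(x,z)=x$ and $p^{'}(x,z)=z$.

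First I would set $n=secat(p)$ and fix a cover $K=K_0\cup\cdots\cup K_n$ by subcomplexes together with simplicial sections $s_j:K_j\to L$ satisfying $p\circ s_j=i_j$, where $i_j:K_j\hookrightarrow K$ denotes the inclusion. For each $j$ I put $K^{'}_j:=f^{-1}(K_j)$, the subcollection of all simplices $\tau$ of $K^{'}$ with $f(\tau)\in K_j$. One checks this is a subcomplex: a face of such a $\tau$ maps under $f$ to a face of $f(\tau)\in K_j$, which lies in $K_j$ since $K_j$ is a subcomplex. Because $\bigcup_j K^{'}_j=f^{-1}\big(\bigcup_j K_j\big)=f^{-1}(K)=K^{'}$, the complexes $K^{'}_0,\dots,K^{'}_n$ cover $K^{'}$ with $n+1$ members.

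Next I would lift each $s_j$ to a section of $p^{'}$ over $K^{'}_j$ by the rule $\sigma_j(z)=\big(s_j(f(z)),z\big)$. This takes values in the pullback since $p\big(s_j(f(z))\big)=i_j\big(f(z)\big)=f(z)$, using $p\circ s_j=i_j$ together with $f(z)\in K_j$. It is simplicial: for a simplex $\lbrace z_0,\dots,z_q\rbrace$ of $K^{'}_j$ the set $\lbrace f(z_0),\dots,f(z_q)\rbrace$ is a simplex of $K_j$, hence $\lbrace s_j(f(z_0)),\dots,s_j(f(z_q))\rbrace$ is a simplex of $L$ while $\lbrace z_0,\dots,z_q\rbrace$ is a simplex of $K^{'}$; thus $\lbrace (s_j(f(z_i)),z_i)\rbrace_i$ is a simplex of $L\prod K^{'}$ each of whose vertices satisfies the defining condition $p(x)=f(z)$, so it is a simplex of $L\prod_K K^{'}$. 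Finally $p^{'}\circ\sigma_j(z)=p^{'}\big(s_j(f(z)),z\big)=z$, whence $p^{'}\circ\sigma_j=i_{K^{'}_j}$, so $\sigma_j$ is a genuine simplicial section. The cover $\lbrace K^{'}_j\rbrace_{j=0}^n$ together with these sections then gives $secat(p^{'})\le n=secat(p)$.

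The only points needing care are the two verifications just made: that $f^{-1}(K_j)$ is a subcomplex, and that the paired map $\sigma_j$ lands in the subcomplex $L\prod_K K^{'}$ rather than merely in the ambient categorical product $L\prod K^{'}$. Both follow purely from $f$ and $s_j$ being simplicial and from $p\circ s_j$ being \emph{literally} the inclusion, so no homotopical lifting is involved. This is exactly why the estimate holds for the strict genus $secat$ and why the fibration hypothesis, while needed to identify $p^{'}$ as a fibration, is not used in the inequality itself.
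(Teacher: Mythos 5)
Your proposal is correct and follows essentially the same route as the paper: pull the cover $\{K_j\}$ back along $f$ to the subcomplexes $f^{-1}(K_j)$ and define the sections by $z\mapsto\big(s_j(f(z)),z\big)$. You merely spell out the verifications (that $f^{-1}(K_j)$ is a subcomplex, that the paired map is simplicial and lands in $L\prod_K K^{'}$) in more detail than the paper does, and your observation that the fibration hypothesis on $p$ is not needed for the strict inequality itself is accurate.
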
 

\begin{proof}
Let $K=\bigcup _i K_{i}$ be the union of its subcomplexes. Obviously, since $f$ is a simplicial map for any $i$, $f^{-1}(K_{i})$ is a subcomplex of $K^{'}$ and $K^{'}=\bigcup _{i}f^{-1}(K_{i})$. Now assume that for each $i$ there exists a simplicial map $s_{i}:K_i \to L$ satisfying $p\circ s_i =i _{K_i}$, where $i_{K_{i}}:K_i \to K$ is the inclusion map. We define ${s_i }^{'}:f^{-1}(K_i) \to L\prod _K K{'}  \subseteq L\prod K'$ by ${s_i}^{'}(v)=(s_i \circ f(v),v)$. Since  ${s_i}^{'}$ is a composition of simplicial maps, it is a simplicial map. Moreover,  $p{'} \circ {s_i}{'}(v)=p^{'}(s_i \circ f(v),v)=v$ by the definition of a pullback. Therefore $s_i'$ is a section for $p'$, and then $secat (p{'})\leq secat (p)$. 
\end{proof}

It is important to minimize the number of rules needed to define an algorithm for the movement of a robot. This can be accomplished by utilizing the relative topological complexity, as shown in \cite{rel}. The same principle is proven for targeted simplicial complexity, as demonstrated in the following corollary.

\begin{corollary}\label{c}
For $L\subseteq K$, $TC(K,L)\leq TC(K)$.
\end{corollary}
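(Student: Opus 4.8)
The plan is to assemble the statement from three results already in hand, with essentially no new computation required. The key observation is that the path fibration $\pi \colon P(K,L) \to K \prod L$ has already been exhibited, in the proof of Theorem \ref{i}, as the pullback of the usual simplicial fibration $p = (\alpha_1,\omega_1)\colon PK \to K \prod K$ along the inclusion $f \colon K \prod L \hookrightarrow K \prod K$. So the whole argument reduces to feeding this pullback description into Proposition \ref{a}.

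Concretely, first I would invoke Theorem \ref{b} to record the identity $TC(K) = secat(p)$, which holds since $K$ is a finite complex. Next I would invoke Theorem \ref{i} to record $TC(K,L) = secat(\pi)$. Then I would recall, verbatim from the proof of Theorem \ref{i}, the identification $p' = \pi$, where $p'\colon (K\prod L)\prod_{K\prod K}PK \to K\prod L$ is the pullback of $p$ induced by the inclusion $f\colon K\prod L \to K\prod K$; this identification is exactly where the work was done, namely in checking that the fiber product $(K\prod L)\prod_{K\prod K}PK$ coincides with $P(K,L)$ and that the projection $p'$ coincides with $\pi$.

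With $\pi$ realized as the pullback $p'$, I would apply Proposition \ref{a}, taking the fibration there to be $p$ and the inducing simplicial map to be the inclusion $f$. Proposition \ref{a} then yields $secat(\pi) = secat(p') \le secat(p)$. Chaining this with the two equalities above gives
\[
TC(K,L) = secat(\pi) \le secat(p) = TC(K),
\]
which is the claim.

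I do not expect a genuine obstacle here: the substantive content—verifying that $\pi$ is a pullback of the path fibration, and that pullbacks do not increase the simplicial \v{S}varc genus—has already been carried out in Theorem \ref{i} and Proposition \ref{a}, so this corollary is a direct corollary in the literal sense. The only point worth stating explicitly is that the hypotheses of the cited theorems (finiteness of $K$, and $L$ a subcomplex of $K$) are in force, so that both $p$ and $\pi$ are the simplicial finite-fibrations to which Theorems \ref{b} and \ref{i} apply.
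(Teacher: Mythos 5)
Your proposal is correct and follows exactly the paper's own argument: both identify $TC(K)=secat(p)$ via Theorem \ref{b} and $TC(K,L)=secat(\pi)$ via Theorem \ref{i}, realize $\pi$ as the pullback of $p$ along the inclusion $K\prod L\hookrightarrow K\prod K$, and conclude by Proposition \ref{a}. No differences worth noting.
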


\begin{proof}
By Theorem
\ref{b}, $TC(K)=secat (p)$ and by Theorem \ref{i}, $TC(K,L)=secat (\pi )$. Also,  the map $\pi :P(K,L) \to K \prod L$ can be considered as the pullback of $p: PK \to K\prod K$ induced by the inclusion map $i:K \prod L \to K \prod K$. Then Proposition \ref{a} implies the inequality.
\end{proof}

\section{Invariance and relationship with the LS-category}\label{category}

In this section we study some homotopy properties of  targeted simplicial complexity. First we check the homotopy invariance of $TC(K, L)$ and then  we provide some close relationships between the LS-category and the targeted simplicial complexity.
Discrete topological complexity $TC$ is an invariant from the category of simplicial complexes, that only depends on the homotopy type \cite{dis}. Now we intend to check if the targeted case is strongly homotopy invariant. To verify we need to recall the definition of strong homotopy type.
 
\begin{definition}[\cite{dis}]
Let $K$ and $K^{'}$ be two finite simplicial complexes and let $L\subseteq K$ and $L^{'}\subseteq K^{'}$. We say that $(K,L)$ and $(K^{'},L^{'})$ have the same strong homotopy type if:
\begin{enumerate}
\item[(i)]
There exist simplicial maps $\phi :K\to K^{'}$ and $\psi :K^{'}\to K $ such that $\phi (L)\subseteq L^{'}$ and $\psi (L^{'})\subseteq L$.
\item[(ii)]
There is a p-homotopy $F:K\to PK$ such that $\alpha \circ F=id_K$, $\omega \circ F=\psi \circ \phi $ and $F(v)(t)\in L$ for all $v\in L$.
\item[(iii)]
There is a p-homotopy $H:K^{'}\to PK^{'}$ such that $\alpha \circ H =id_{K^{'}}$, $\omega \circ H=\phi \circ \psi $ and $H(w)(t)\in L^{'}$ for all $w\in L^{'}$.
\end{enumerate}

\end{definition}
By the following theorem we observe that $TC(K,L)$ depends only on the strong homotopy type of the pairs.

\begin{theorem}
Let $K$ and $K^{'}$ be two finite simplicial complexes, $L\subseteq K$ and $L^{'}\subseteq K^{'}$. If $(K,L)$ and $(K^{'},L^{'})$ have the same strong homotopy type, then $TC(K,L)=TC(K^{'},L^{'})$.

\begin{proof}
Let there exist $\phi :K\rightarrow K^{'}$ and $\psi :K^{'}\to K $ such that $\phi (L)\subseteq L^{'}$ and $\psi (L^{'})\subseteq L$, and let there be a p-homotopy $F:K\to PK$ such that $\alpha \circ F=id_K$, $\omega \circ F=\psi \circ \phi $ and $F(v)(t)\in L$ for all $v\in L$. We show that $TC(K,L)\leq TC(K^{'},L^{'})$. Suppose that $\Omega \subseteq K^{'}\prod L^{'}$ and there exists simplicial map $\sigma :\Omega \to P(K^{'},L^{'})$ such that $\pi \circ \sigma =i_{\Omega }$ where $\pi :P(K^{'},L^{'})\to  K^{'}\prod L^{'}$ is the finite-fibration and $i_{\Omega }$ is the inclusion map. Define $\Lambda =(\phi \prod \phi \vert _L)^{-1}(\Omega )\subseteq K \prod L$ and $\lambda :\Lambda \to P(K,L)$ by 
\[
\lambda (v,w)= F(v) \ast \psi \circ \sigma (\phi (v),\phi (w))\ast \overline{F(w)},
\]
where $\overline{F(w)}$ is the inverse of the Moore path $F(w)$ and $\ast$ is the product of Moore paths. This map is clearly a simplicial map and we have 
$\alpha \circ \lambda (v,w)=\alpha \circ F(v)=v$ and $\omega \circ \lambda (v,w)=\omega \circ \overline{F(w)}=\alpha \circ F(w)=w$. Now assume that $TC(K^{'},L^{'})=n$, and then there exists a covering $K^{'}\prod L^{'}=\Omega _0\cup ...\cup \Omega _n$ where $\Omega _j$, $j=0,...,n$, are subcomplexes satisfying in Definition \ref{def}. Then the corresponding $\Lambda _j=(\phi \prod \phi \vert _L)^{-1}(\Omega _j)\subseteq K \prod L$, $j=0,...,n$, make a covering of $K \prod L$ satisfying Definition \ref{def}. Hence $TC(K,L)\leq n$. The converse inequality can be proved by the same way.
\end{proof}
\end{theorem}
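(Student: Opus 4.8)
The plan is to run the whole argument through the fibration picture supplied by Theorem \ref{i}, which identifies $TC(K,L)=secat(\pi)$ for $\pi:P(K,L)\to K\prod L$ and, symmetrically, $TC(K',L')=secat(\pi')$ for $\pi':P(K',L')\to K'\prod L'$. Since the notion of having the same strong homotopy type is visibly symmetric in the two pairs --- conditions (ii) and (iii) are mirror images under the interchange $(K,L)\leftrightarrow(K',L')$, $\phi\leftrightarrow\psi$, $F\leftrightarrow H$ --- it is enough to establish the single inequality $TC(K,L)\le TC(K',L')$; the reverse is then obtained verbatim after swapping roles. So I would fix $TC(K',L')=n$ together with an optimal cover $K'\prod L'=\Omega_0\cup\cdots\cup\Omega_n$ and simplicial sections $\sigma_j:\Omega_j\to P(K',L')$ of $\pi'$.

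Next I would transport this data back to $K\prod L$ along the simplicial map $\Phi:=\phi\prod(\phi|_L)$, which is well-defined precisely because $\phi(L)\subseteq L'$ (condition (i)). Pulling the cover back, the subcomplexes $\Lambda_j:=\Phi^{-1}(\Omega_j)$ cover $K\prod L$, so it remains to build, for each $j$, a simplicial section $\lambda_j:\Lambda_j\to P(K,L)$ of $\pi$. The natural prescription steers a pair $(v,w)$ first along the homotopy track $F(v)$ from $v$ to $\psi\phi(v)$, then along the $\psi$-image of the $K'$-path $\sigma_j(\phi v,\phi w)$, and finally back along the reversed track $\overline{F(w)}$ from $\psi\phi(w)$ to $w$; explicitly
\[
\lambda_j(v,w)=F(v)\ast\bigl(\psi\circ\sigma_j(\phi v,\phi w)\bigr)\ast\overline{F(w)}.
\]
The identity $\omega\circ F=\psi\circ\phi$ guarantees that consecutive legs share endpoints, so the Moore-path products are defined; and as $\lambda_j$ is assembled from compositions, products, and reversals of simplicial maps, it is itself simplicial.

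The verifications I expect to carry the real weight are the two that pin $\lambda_j$ down as a genuine section landing in the \emph{targeted} complex. First, membership in $P(K,L)$ requires only that the terminal vertex lie in $L$, and indeed $\omega\circ\lambda_j(v,w)=\omega\circ\overline{F(w)}=\alpha\circ F(w)=w$, which belongs to $L$ because $(v,w)\in K\prod L$. Second, the section identity $\pi\circ\lambda_j=i_{\Lambda_j}$ unpacks into $\alpha\circ\lambda_j(v,w)=v$ and $\omega\circ\lambda_j(v,w)=w$; the former is immediate from $\alpha\circ F=id_K$, the latter from the computation just made. With these in hand the $n+1$ subcomplexes $\Lambda_j$ exhibit $secat(\pi)\le n$, that is $TC(K,L)\le TC(K',L')$.

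The point I would guard most carefully --- and where the pair-specific hypotheses separate this argument from the homotopy invariance of the absolute $TC(K)$ --- is that the construction must respect the target $L$ rather than merely the ambient $K$. Here the containments $\psi(L')\subseteq L$ and $F(v)(t)\in L$ for $v\in L$ are exactly what keep the junction vertex $\psi\phi(w)$ and the final leg inside $L$, so that $\lambda_j$ genuinely factors through $P(K,L)$; their mirror images (using $H$ and $H(w)(t)\in L'$) are what power the reverse direction, so the symmetric definition distributes the hypotheses evenly across the two inequalities. Once $TC(K,L)\le TC(K',L')$ is secured, that built-in symmetry yields the opposite inequality and hence the equality $TC(K,L)=TC(K',L')$.
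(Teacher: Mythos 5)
Your proof is correct and takes essentially the same route as the paper's: pull back the optimal cover of $K'\prod L'$ along $\phi\prod(\phi|_L)$, build sections of $\pi$ by the concatenation $F(v)\ast\bigl(\psi\circ\sigma_j(\phi v,\phi w)\bigr)\ast\overline{F(w)}$, and appeal to the symmetry of the hypotheses for the reverse inequality. Your endpoint and target-membership verifications match (and slightly elaborate on) those in the paper's proof.
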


In \cite{dis} it was proven that $scat(K) \leq TC(K)$ and in \cite{rel}, the inequality $cat (X)\leq TC(X,Y)$ was shown. Now we aim to find a similar result for $TC(K,L)$.
From this point forward, assume that $K$ is a finite complex, $v_0 \in K$ a fixed vertex in $K$ and $p_0:P(K,v_0)\to K\prod \lbrace v_0 \rbrace $ is the path fibration where $P(K,v_0)=\lbrace      \gamma \in PK: \omega (\gamma )=v_0      \rbrace$ and $p_0 (\gamma)=(\alpha (\gamma),v_0)$. It is important to note that $p_0$ is the pullback of the fibration $p$, over the inclusion map $K\prod \lbrace v_0 \rbrace \hookrightarrow K\prod K$. Let  $\omega_1: P_0 K \to K$ be the terminal simplicial map where $P_0 K = \{ \gamma \in PK :\, \alpha_1 (\gamma )=v_0\}$.

\begin{lemma}\label{d}
Two simplicial maps $\omega_1$ and $p_0$ have the same \v{S}varc genus;
$secat(\omega_1)=secat(p_0)$.
Moreover, if $K$ is a connected finite complex, then $scat(K)=TC(K, v_0)$.
\end{lemma}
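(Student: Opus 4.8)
The plan is to reduce both equalities to a single structural fact---that reversing Moore paths is a simplicial isomorphism exchanging initial and terminal vertices---and then to feed this into the already-established identifications of $scat$ and $TC(K,L)$ with Švarc genera.

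First I would establish $secat(\omega_1)=secat(p_0)$. The key observation is that reversal carries a path starting at $v_0$ to a path ending at $v_0$. So I define $R:P_0 K\to P(K,v_0)$ by $R(\gamma)=\overline{\gamma}$ and check that $R$ is a simplicial isomorphism. If $\lbrace \gamma_0,\dots,\gamma_p\rbrace$ is a simplex of $P_0 K$, then for every $i\in\mathbb{Z}$ the defining set $\lbrace \overline{\gamma_0}(i),\dots,\overline{\gamma_p}(i),\overline{\gamma_0}(i+1),\dots,\overline{\gamma_p}(i+1)\rbrace$ equals $\lbrace \gamma_0(-i),\dots,\gamma_p(-i),\gamma_0(-i-1),\dots,\gamma_p(-i-1)\rbrace$, which is exactly the simplex of $K$ coming from $\lbrace \gamma_0,\dots,\gamma_p\rbrace$ at index $-i-1$; hence $\lbrace \overline{\gamma_0},\dots,\overline{\gamma_p}\rbrace$ is a simplex of $PK$. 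Since reversal is its own two-sided inverse, $R$ is an isomorphism. Because $\overline{\gamma}^{+}=-\gamma^{-}$, for $\gamma\in P_0 K$ we get $\omega_1(\overline{\gamma})=\overline{\gamma}(\overline{\gamma}^{+})=\gamma(\gamma^{-})=\alpha_1(\gamma)=v_0$, so $R$ indeed lands in $P(K,v_0)$.

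Next I would record the trivial simplicial isomorphism $j:K\to K\prod\lbrace v_0\rbrace$, $j(v)=(v,v_0)$, and verify that the square
\[
\xymatrix{
P_0 K \ar[r]^-{R} \ar[d]_-{\omega_1} & P(K,v_0) \ar[d]^-{p_0} \\
K \ar[r]_-{j} & K\prod\lbrace v_0\rbrace
}
\]
commutes, i.e. $p_0\circ R=j\circ\omega_1$: since the initial vertex of $\overline{\gamma}$ is the terminal vertex of $\gamma$, we have $p_0(\overline{\gamma})=(\alpha_1(\overline{\gamma}),v_0)=(\omega_1(\gamma),v_0)=j(\omega_1(\gamma))$. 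As Švarc genus is defined purely through coverings of the codomain by subcomplexes admitting simplicial sections, it is unchanged under such a square of isomorphisms: given a covering $K=L_0\cup\dots\cup L_n$ with simplicial sections $s_i$ of $\omega_1$, the subcomplexes $j(L_i)$ cover $K\prod\lbrace v_0\rbrace$ and $R\circ s_i\circ j^{-1}$ are simplicial sections of $p_0$ (one computes $p_0\circ(R\circ s_i\circ j^{-1})=j\circ i_{L_i}\circ j^{-1}$, the inclusion of $j(L_i)$), giving $secat(p_0)\le secat(\omega_1)$; applying $R^{-1}$ and $j^{-1}$ yields the reverse inequality.

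Finally, for the second statement I would assemble three identifications. Theorem \ref{th2.13n} gives $scat(K)=secat(\omega_1)$ for connected finite $K$; the first part gives $secat(\omega_1)=secat(p_0)$; and Theorem \ref{i} applied to the subcomplex $L=\lbrace v_0\rbrace$ gives $TC(K,v_0)=secat(\pi)$ for the fibration $\pi:P(K,\lbrace v_0\rbrace)\to K\prod\lbrace v_0\rbrace$, $\pi(\gamma)=(\alpha_1(\gamma),\omega(\gamma))$. Since every $\gamma\in P(K,\lbrace v_0\rbrace)$ has $\omega(\gamma)=v_0$, the map $\pi$ coincides with $p_0$, so $TC(K,v_0)=secat(p_0)$, and chaining the three gives $scat(K)=TC(K,v_0)$. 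The main obstacle is the careful bookkeeping of the first step---verifying that path reversal is genuinely simplicial on the path complex and that it interchanges $\alpha_1$ and $\omega_1$ at the level of the Moore-path indices $\gamma^{-},\gamma^{+}$; once this isomorphism is in hand, everything else is the formal invariance of Švarc genus together with the cited theorems.
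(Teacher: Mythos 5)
Your proposal is correct and follows essentially the same route as the paper: both arguments use reversal of Moore paths to convert sections of $\omega_1$ into sections of $p_0$ over the corresponding subcomplexes (and back), and both conclude the second claim by combining Theorem \ref{th2.13n} with Theorem \ref{i}. The only difference is presentational---you package the reversal as a simplicial isomorphism fitting into a commuting square and invoke invariance of $secat$ once, and you verify that reversal is simplicial on the path complex, a point the paper leaves implicit.
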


\begin{proof}
Suppose that $secat(\omega_1)=n$. Then there exist subcomplexes $\Lambda _0,...,\Lambda _n$ of $K$ and simplicial maps $\lambda _i :\Lambda _i\to P_0K$ such that $\omega_1 \circ \lambda _i=i_{\Lambda _i}$ for all $0\leq i\leq n$. We define $\Omega _i =\Lambda _i \prod \lbrace v_0 \rbrace $. It is clear that $\lbrace \Omega _i \rbrace _0 ^n$ covers $K\prod \lbrace  v_0\rbrace$. Put $\sigma _i:\Omega _i\to P(K,v_0)$ by $\sigma _i (v,v_0)=\overline{\lambda _i(v)}$. We have $p_0 \circ \sigma _i(v,v_0)=(\alpha (\sigma _i(v,v_0)),\omega(\sigma _i(v,v_0)))=(\alpha \overline{(\lambda _i(v))},\omega (\overline{\lambda _i(v))})=(v,v_0)$.
Then $secat(p_0)\leq secat(\omega)$. Now suppose that $secat(p_0)=m$. Thus there exist subcomplexes $\Omega _0,...,\Omega _m$ of $K\prod \lbrace v_0 \rbrace$ and simplicial maps $\sigma _i:\Omega _i\to P(K,v_0)$ such that $p_0 \circ \sigma _i=i_{\Omega _i}$ for all $0\leq i\leq m$. We define $\Lambda _i$ as the projection of $\Omega _i$ on the first component and $\lambda _i:\Lambda _i\to P_0K$ by $\lambda _i(v)=\overline{\sigma _i(v,v_0)}$. Hence we have $\omega \circ \lambda _i(v)=\omega \overline{(\sigma _i (v,v_0))}=v$ and  then $secat(\omega)\leq secat(p_0)$. Therefore $secat(\omega)= secat(p_o)$.

Now Let $K$ be a connected finite complex. Since $K$ is finite, by Theorem \ref{i}, $TC(K,v_0)=secat (p_0)$.
Also since $K$ is connected and finite, by Theorem \ref{th2.13n}, $scat (K) = secat (\omega_1)$. Thus $TC(K,v _0)=scat(K)$. 
\end{proof}


Lemma \ref{d} shows that targeted simplicial complexity, where the target is a point, equals the LS-category of $K$. 
Now we show that the LS-category of $K$ is a lower bound for the targeted simplicial complexity. 
In \cite{rel}, Short compared $TC(X,Y)$ with the LS-categories $cat(X)$ and $cat(X\times X)$ and he obtained the inequalities $cat(X)\le TC(X,Y) \le cat(X \times X)$. We prove a  discrete version in the following.

\begin{proposition}\label{ine}
Let $K$ be a connected finite complex and $L\subseteq K$ be a non-empty subcomplex of $K$. Then 
\[
scat(K)\leq TC(K,L)\leq scat (K\prod K).
\]
\end{proposition}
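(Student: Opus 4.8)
The plan is to prove the two inequalities separately, reducing each to a statement about the \v{S}varc genus of a simplicial fibration via Theorem \ref{i}, Theorem \ref{b}, Lemma \ref{d} and Theorem \ref{th2.13n}.

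For the lower bound $scat(K)\leq TC(K,L)$, since $L$ is non-empty I would fix a vertex $v_0\in L$, so that $\lbrace v_0\rbrace\subseteq L$ yields a subcomplex $K\prod\lbrace v_0\rbrace\subseteq K\prod L$. The key observation is that the based path fibration $p_0:P(K,v_0)\to K\prod\lbrace v_0\rbrace$ of Lemma \ref{d} is exactly the restriction of $\pi:P(K,L)\to K\prod L$ over this subcomplex, because $\pi^{-1}(K\prod\lbrace v_0\rbrace)=\lbrace \gamma\in PK:\omega_1(\gamma)=v_0\rbrace=P(K,v_0)$ and $\pi$ agrees with $p_0$ there. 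I would then invoke the general principle that restricting a fibration to a subcomplex can only decrease its \v{S}varc genus: given a covering $K\prod L=\Omega_0\cup\dots\cup\Omega_n$ with simplicial sections $\sigma_i$, each restriction $\sigma_i$ to $\Omega_i\cap(K\prod\lbrace v_0\rbrace)$ still lands in $\pi^{-1}(K\prod\lbrace v_0\rbrace)=P(K,v_0)$, since $\pi\circ\sigma_i$ is the inclusion; hence $secat(p_0)\leq secat(\pi)$, i.e. $TC(K,v_0)\leq TC(K,L)$. Combining this with Lemma \ref{d}, which gives $scat(K)=TC(K,v_0)$ for the connected finite complex $K$, yields $scat(K)\leq TC(K,L)$.

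For the upper bound I would chain $TC(K,L)\leq TC(K)$ from Corollary \ref{c} and then show $TC(K)\leq scat(K\prod K)$. Writing $TC(K)=secat(p)=hsecat(p)$ through Theorem \ref{b} and Theorem \ref{iii}, it suffices to produce, over every categorical subcomplex of $K\prod K$, a simplicial section of $p$ up to contiguity class. If $U\subseteq K\prod K$ is categorical, then $i_U\sim c_w$ for some vertex $w=(a,b)$, and since $K$ is connected the fibre $p^{-1}(w)$ is non-empty, as a Moore path from $a$ to $b$ exists. Choosing such a Moore path $\eta_w$, the constant map $c_{\eta_w}:U\to PK$ satisfies $p\circ c_{\eta_w}=c_w\sim i_U$, so it is an ``up to contiguity'' section of $p$ over $U$. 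Therefore $hsecat(p)\leq scat(K\prod K)$, and the upper bound follows.

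The main obstacle I expect is the upper bound step $TC(K)\leq scat(K\prod K)$: it relies essentially on the fibration property, used to replace strict sections by contiguity sections through $secat(p)=hsecat(p)$, and on the non-emptiness of the fibres, which in turn uses connectedness of $K$ to guarantee a Moore path realizing each base vertex $w=(a,b)$. The lower bound, by contrast, is a routine restriction argument once the identification $p_0=\pi|_{K\prod\lbrace v_0\rbrace}$ is made; the only care needed there is to choose $v_0$ inside $L$ so that the slice $K\prod\lbrace v_0\rbrace$ lies in $K\prod L$.
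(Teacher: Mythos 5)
Your proof is correct and follows essentially the same route as the paper: the lower bound is obtained by restricting the covering of $K\prod L$ and its sections to the slice $K\prod\lbrace v_0\rbrace$ with $v_0\in L$ and then invoking Lemma \ref{d}, while the upper bound chains $TC(K,L)\leq TC(K)\leq scat(K\prod K)$ via Corollary \ref{c}. The only difference is that the paper simply cites $TC(K)\leq scat(K\prod K)$ from the literature, whereas you supply a correct direct argument for it (constant sections $c_{\eta_w}$ over categorical subcomplexes, using connectedness for non-empty fibres and $secat(p)=hsecat(p)$ for the fibration $p$), and you make explicit the choice $v_0\in L$ that the paper leaves implicit.
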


\begin{proof}
By \cite[Theorem 4.4]{dis}, we have $TC(K)\leq scat (K\prod K)$. Also Corollary \ref{c} shows that $TC(K,L)\leq TC(K)$. Therefore, $TC(K,L)\leq scat (K\prod K)$. Now we prove that $scat(K)\leq TC(K,L)$. By Lemma \ref{d}, we have $scat(K)=secat(p_0)$. It is enough to show that $secat(p_0)\leq TC(K,L)$. Assume that $TC(K,L)=n$. Then there exist subcomplexes $\Omega _0,...,\Omega_n$ of $K \prod L$ and simplicial maps $\sigma _i:\Omega _i\to P(K,L)$ such that $\pi \circ \sigma _i=i_{\Omega _i}$ for all $0\leq i\leq n$. Define $\Lambda _i=\Omega _i \cap (K\prod \lbrace v_0\rbrace)\subseteq K\prod \lbrace v_0\rbrace$ and $\lambda _i=\sigma _i \vert _{\Lambda _i}:\Lambda _i\to P(K,v_0)$. We have $p_0 \circ \lambda _i(v,v_0)=(\alpha(\lambda_i(v,v_0)),\omega (\lambda_i(v,v_0)))= \pi \circ \lambda _i (v,v_0)=(v,v_0)$. Hence $secat(p_0)\leq TC(K,L)$ and then $scat (K) \le TC(K,L)$.
\end{proof}

In \cite{dis}, it was shown that $K$ is strongly collapsible if and only if $TC(K)=0$. We can conclude a similar result for $TC(K,L)$. Note that $K$ is strongly collapsible if and only if $scat (K) =0$, because $scat$ counts the number of categorical subcomplexes; See Definition \ref{de2.12n}.

\begin{proposition}
$TC(K,L)=0$ if and only if $K$ is strongly collapsible.
\end{proposition}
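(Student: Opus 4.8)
The plan is to leverage the chain of inequalities established in Proposition~\ref{ine} together with the remark preceding the statement. Recall that $scat(K) = 0$ exactly when $K$ is categorical in itself, i.e.\ when $K$ is strongly collapsible, since $scat(K)$ counts one less than the minimal number of categorical subcomplexes covering $K$. So the strategy reduces to showing the equivalence $TC(K,L) = 0 \iff scat(K) = 0$, and this will follow by squeezing $TC(K,L)$ between two quantities both controlled by strong collapsibility.

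For the forward direction, I would assume $TC(K,L) = 0$. Proposition~\ref{ine} gives $scat(K) \leq TC(K,L)$, so immediately $scat(K) \leq 0$, forcing $scat(K) = 0$; by the characterization recalled just before the statement, $K$ is strongly collapsible. This direction is essentially free from the inequality already proved. For the reverse direction, I would assume $K$ is strongly collapsible, so $scat(K) = 0$, and aim to show $TC(K,L) = 0$. Here I would use the upper bound. Since $scat(K) = 0$, the product $K \prod K$ should also be strongly collapsible (the categorical product of a strongly collapsible complex with itself remains strongly collapsible, as $id_{K\prod K}$ lies in the contiguity class of a constant map via the product of the collapsing contiguities), hence $scat(K \prod K) = 0$. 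Then Proposition~\ref{ine} gives $TC(K,L) \leq scat(K\prod K) = 0$, forcing $TC(K,L) = 0$.

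The main obstacle I anticipate is justifying that $scat(K\prod K) = 0$ whenever $scat(K) = 0$, i.e.\ that strong collapsibility is preserved under the categorical product with itself. The cleanest route is to observe that if $id_K \sim c_{v_0}$ through a sequence of contiguous maps $id_K = \varphi_0 \sim_c \cdots \sim_c \varphi_m = c_{v_0}$, then the product maps $\varphi_k \prod \varphi_k : K\prod K \to K\prod K$ form a chain of contiguous maps (contiguity is respected by the categorical product, since a simplex of $K\prod K$ projects to simplices of each factor) connecting $id_{K\prod K}$ to the constant map $c_{(v_0,v_0)}$. This yields $scat(K\prod K) = 0$ directly, without invoking any auxiliary theorem. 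Alternatively, one could short-circuit the whole argument by appealing to Corollary~\ref{c} and the cited fact from \cite{dis} that $TC(K) = 0 \iff K$ is strongly collapsible: strong collapsibility gives $TC(K) = 0$, and then $TC(K,L) \leq TC(K) = 0$ by Corollary~\ref{c}.

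In summary, I would present both directions compactly: the forward direction from $scat(K) \leq TC(K,L)$, and the reverse direction either from $TC(K,L) \leq scat(K\prod K)$ after verifying $scat(K\prod K)=0$, or more economically from $TC(K,L) \leq TC(K)$ via Corollary~\ref{c} together with the known equivalence $TC(K)=0 \iff K$ strongly collapsible from \cite{dis}. The second route is shorter and avoids the product-preservation lemma, so I would likely adopt it as the primary argument while noting that the inequalities of Proposition~\ref{ine} already encode everything needed.
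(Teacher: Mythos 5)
Your proposal is correct and, in the route you ultimately adopt as primary, coincides with the paper's own proof: the forward direction from $scat(K)\leq TC(K,L)$ in Proposition~\ref{ine}, and the reverse direction from $TC(K)=0$ for strongly collapsible $K$ (cited from \cite{dis}) together with Corollary~\ref{c}. The alternative route you sketch via $scat(K\prod K)=0$ is also sound (it is essentially Remark~\ref{f}(1)), but it is not the argument the paper uses.
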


\begin{proof}
Suppose that $TC(K,L)=0$. Since $ scat (K)\leq TC(K,L)$, we have $ scat (K) = 0$. Therefore by Definition \ref{de2.12n}, $K$ is strongly collapsible. Now suppose that $K$ is strongly collapsible. Then by \cite[Corollary 4.5]{dis}, $TC(K)=0$. Thus by Corollary \ref{c}, $TC(K,L)\leq TC(K)=0$ and therefore $TC(K,L)=0$.
\end{proof}

In Lemma \ref {d} we see that if the target is a point, then $TC(K, v_0) = scat (K)$. This result can be generalized to categorical targets as follows. 

\begin{theorem}\label{intr}
Let $K$ be a finite connected complex and $L$ be a categorical subcomplex of $K$. Then $TC(K,L)=scat(K)$.
\end{theorem}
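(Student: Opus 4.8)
The plan is to establish the two inequalities $scat(K)\le TC(K,L)$ and $TC(K,L)\le scat(K)$ separately. The first is immediate: a categorical subcomplex is in particular non-empty, so since $L$ is a non-empty subcomplex of the connected finite complex $K$, Proposition \ref{ine} already gives $scat(K)\le TC(K,L)$. Thus the whole content of the theorem is the reverse inequality $TC(K,L)\le scat(K)$, and by Theorem \ref{i} it suffices to produce a covering of $K\prod L$ by $scat(K)+1$ subcomplexes, each admitting a strict simplicial section of the fibration $\pi\colon P(K,L)\to K\prod L$.

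To build such sections I would first fix the vertex $v_0\in K$ witnessing the categoricity of $L$, so that $i_L\sim c_{v_0}$ in the sense of Definition \ref{de2.12n}. By Theorem \ref{5.10}(i) this contiguity class promotes to a p-homotopy, giving a simplicial map $G\colon L\to PK$ with $\alpha_1\circ G=i_L$ and $\omega_1\circ G=c_{v_0}$; that is, $G(b)$ is a Moore path from $b$ to $v_0$ for every vertex $b\in L$. Next I would feed this same $v_0$ into Theorem \ref{th2.13n}: since $K$ is connected and finite, $scat(K)=secat(\omega_1)$ for $\omega_1\colon P_0K\to K$, so writing $scat(K)=n$ there is a covering $K=\Lambda_0\cup\cdots\cup\Lambda_n$ by subcomplexes together with simplicial sections $\lambda_i\colon\Lambda_i\to P_0K$ satisfying $\omega_1\circ\lambda_i=i_{\Lambda_i}$; here $\lambda_i(a)$ is a Moore path from $v_0$ to $a$. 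I would then set $\Omega_i=\Lambda_i\prod L\subseteq K\prod L$; because $\{\Lambda_i\}$ covers $K$ and every simplex $\tau$ of $K\prod L$ has $p_1(\tau)$ lying in some $\Lambda_i$ while $p_2(\tau)\in L$, the family $\{\Omega_i\}_{i=0}^n$ covers $K\prod L$. Finally I would define $\sigma_i\colon\Omega_i\to P(K,L)$ by $\sigma_i(a,b)=\overline{\lambda_i(a)}\ast\overline{G(b)}$, the concatenation of the reversed path from $a$ to $v_0$ with the reversed path from $v_0$ to $b$.

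The endpoint bookkeeping is exactly what makes this work: $\overline{\lambda_i(a)}$ runs from $a$ to $v_0$ and $\overline{G(b)}$ runs from $v_0$ to $b$, so the two meet at $v_0$, the product $\ast$ is defined, and $\alpha_1(\sigma_i(a,b))=a$ while $\omega_1(\sigma_i(a,b))=b\in L$. Hence $\sigma_i(a,b)\in P(K,L)$ and $\pi\circ\sigma_i=i_{\Omega_i}$ on the nose, so $secat(\pi)\le n=scat(K)$, which by Theorem \ref{i} reads $TC(K,L)\le scat(K)$. Combined with the inequality from Proposition \ref{ine}, this yields the claimed equality $TC(K,L)=scat(K)$.

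The step I expect to require the most care is verifying that each $\sigma_i$ is genuinely a simplicial map $\Omega_i\to P(K,L)$. It is a fibered concatenation of the simplicial maps $(a,b)\mapsto\overline{\lambda_i(a)}$ and $(a,b)\mapsto\overline{G(b)}$, which share the constant joining value $v_0$; this is the same type of construction declared simplicial in the proof of the strong homotopy invariance theorem, and I would justify it here by checking directly that $\sigma_i$ sends each simplex of $\Omega_i=\Lambda_i\prod L$ to a simplex of the path complex $P(K,L)$, with the product $\ast$ posing the only real combinatorial bookkeeping. Everything else (covering, reversal, and endpoint identities) is routine once the choice of $v_0$ simultaneously serves the categoricity of $L$ and the base point in Theorem \ref{th2.13n}.
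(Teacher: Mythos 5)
Your proposal is correct and follows essentially the same route as the paper's proof: the lower bound comes from Proposition \ref{ine}, and the upper bound is obtained by covering $K\prod L$ with products $\Lambda_i\prod L$ coming from a categorical-type covering of $K$ and defining sections by concatenating, through the common vertex $v_0$, a path from $a$ to $v_0$ (from the $scat$ data, which the paper accesses via Lemma \ref{d} and $p_0$ rather than via Theorem \ref{th2.13n} and $\omega_1$, an immaterial difference) with a path from $v_0$ to $b$ supplied by the categoricity of $L$. The paper glosses over the simpliciality of the concatenated section exactly as you do, so your argument matches in both substance and level of detail.
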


\begin{proof}
We see in Proposition \ref{ine} that $scat(K)\leq TC(K,L)$. It  remains to prove that $TC(K,L)\leq scat(K)$. Assume that $scat(K)=n$. Then by Lemma \ref{d}, we have $secat(p_0)=n$. Let  $\Omega _0,...,\Omega_n$ be such subcomplexes of $K\prod \lbrace v_0\rbrace$ with sections $\sigma _i$ over each $\Omega _i$. Let $\pi _1(\Omega _i)$ be the projection of $\Omega _i$, onto its $K$ component. Define $\Lambda _i=\pi _1(\Omega _i)\prod L$. Since $\lbrace  \pi _1(\Omega _i)\rbrace _0 ^n$ covers $K$, the collection of $\Lambda _i$ covers $K \prod L$. Now suppose that $H:L\prod [0,m]\to K$ is a p-homotopy such that $H(-,0)=c_{v_0}$ and $H(-,m)=i_L$. Put $h_l:=H( l,-)$ for all $l\in L$. One can see that $h_l$ is a Moore path from $v_0$ to $l$. Define $\lambda _i:\Lambda _i\to P(K,L)$ by $\lambda _i(v,l)=\sigma _i(v,v_0)\ast h_l$, where $\ast $ is the product of Moore paths. Clearly $\lambda _i$ is a simplicial map and we have $\pi \circ \lambda _i (v,l)=(\alpha(\lambda _i(v,l)),\omega(\lambda _i(v,l)))=(\alpha(\sigma _i(v,v_0)),\omega(h_l))=(v,l)$. Thus $TC(K,L)\leq scat(K)$.
\end{proof}
  
\section{Geometric Realization}
In this section, we compare the targeted simplicial complexity \( TC(K, L) \) with the usual relative topological complexity \( TC(|K|, |L|) \), where \( |\cdot| \) denotes the geometric realization functor. In \cite{Inv}, it was discussed that topological complexity can be computed by considering closed subspaces instead of open subspaces. This fact can be applied for geometric realization of any finite simplicial complex. We intend to prove the following theorem by using this statement.

\begin{theorem}\label{e}
$TC(|K|,|L|)\leq TC(K,L)$.
\end{theorem}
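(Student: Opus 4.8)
The plan is to start from an optimal simplicial resolution of $TC(K,L)$ and transport it, simplex by simplex, to a closed covering of $|K|\times|L|$ equipped with genuine continuous sections of the topological path fibration. By Theorem \ref{i}, $TC(K,L)=secat(\pi)=n$, so there is a covering $K\prod L=\Omega_0\cup\dots\cup\Omega_n$ by subcomplexes together with simplicial sections $\sigma_i:\Omega_i\to P(K,L)$ satisfying $\pi\circ\sigma_i=i_{\Omega_i}$; in particular $\alpha(\sigma_i(a,b))=a$ and $\omega(\sigma_i(a,b))=b$ for every vertex $(a,b)\in\Omega_i$. On the topological side, by Definition \ref{de2.2n}, $TC(|K|,|L|)=secat(\pi_{top})$ for the path fibration $\pi_{top}:P(|K|,|L|)\to|K|\times|L|$, $\pi_{top}(\gamma)=(\gamma(0),\gamma(1))$. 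Thus it suffices to cover $|K|\times|L|$ by $n+1$ closed sets admitting continuous sections of $\pi_{top}$ and then invoke \cite{Inv}, which permits computing $secat(\pi_{top})$ with closed covers since $|K|\times|L|$ is a compact polyhedron.

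First I would reconcile the two base spaces. Let $\mu:|K\prod L|\to|K|\times|L|$ be the piecewise-linear projection induced by $p_1,p_2$; it is surjective but far from injective, so one cannot naively push forward sections. The key gadget is a continuous section $\nu$ of $\mu$ built from products of barycentric coordinates: writing $x=\sum_a s_a\,a\in|K|$ and $y=\sum_b r_b\,b\in|L|$, set $\nu(x,y)=\sum_{a,b}s_a r_b\,(a,b)$. This lands in $|K\prod L|$ because the support of the coefficient array is $\operatorname{supp}(x)\times\operatorname{supp}(y)$, whose two factors are simplices of $K$ and $L$ and hence form a simplex of $K\prod L$; moreover $\mu\circ\nu=\mathrm{id}$. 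Setting $W_i=\nu^{-1}(|\Omega_i|)$ yields closed subsets of $|K|\times|L|$ that cover it, since for any $(x,y)$ the carrying simplex $\operatorname{supp}(x)\times\operatorname{supp}(y)$ lies in some $\Omega_i$ and then $\nu(x,y)\in|\Omega_i|$.

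Next I would construct the sections. Fix $(x,y)\in W_i$ with carrying simplex $\sigma\times\tau=\operatorname{supp}(x)\times\operatorname{supp}(y)\in\Omega_i$, and let $\gamma_{ab}=\sigma_i(a,b)$ for $a\in\sigma$, $b\in\tau$; since $\sigma_i$ is simplicial, these Moore paths span a simplex of $P(K,L)$. Realizing them as maps $|\gamma_{ab}|:\mathbb{R}\to|K|$ on the common integer parametrization, I define $S_i(x,y)$ to be the pointwise affine combination $t\mapsto\sum_{a,b}s_a r_b\,|\gamma_{ab}|(t)$, reparametrized to $[0,1]$ by a fixed affine rescaling of a global time window (available because $K$ is finite, so all the finitely many paths $\sigma_i(a,b)$ stabilize outside one interval $[-M,M]$). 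The defining simplex condition of the path complex guarantees that for each $t$ the vertices $\gamma_{ab}(\lfloor t\rfloor),\gamma_{ab}(\lceil t\rceil)$ all lie in a single simplex of $K$, so the combination stays inside $|K|$ and $S_i(x,y)$ is a genuine path. Its left tail equals $\sum_{a,b}s_a r_b\,a=x$ and its right tail equals $\sum_{a,b}s_a r_b\,b=y\in|L|$, whence $S_i(x,y)\in P(|K|,|L|)$ and $\pi_{top}\circ S_i=\mathrm{id}_{W_i}$.

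The main obstacle is verifying that each $S_i$ is continuous as a map into the path space, and this is exactly where Theorem \ref{Int} enters: with the sup-metric $e(f,g)=\sup_t d(f(t),g(t))$ on $P(|K|,|L|)$, continuity of $S_i$ reduces to continuity of the finitely many coefficient products $s_a r_b$ in $(x,y)$, the fixed realized paths $|\gamma_{ab}|$ playing the role of constants; crossings between adjacent simplices are harmless because a coefficient vanishes exactly as its vertex leaves the support, matching the section already defined on the face. Granting this, the closed sets $W_0,\dots,W_n$ together with the continuous sections $S_0,\dots,S_n$ give $secat(\pi_{top})\le n$ after the closed-to-open passage of \cite{Inv} on the compact polyhedron $|K|\times|L|$, whence $TC(|K|,|L|)\le n=TC(K,L)$. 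The two places demanding the most care are the continuity of the interpolated section across faces, handled by the common integer parametrization and the vanishing-coefficient argument, and the legitimacy of closed covers for the relative fibration, handled by \cite{Inv}.
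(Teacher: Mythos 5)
Your proof is correct, but it follows a noticeably more constructive route than the one in the paper. The paper also starts from an optimal cover $\Omega_0,\dots,\Omega_n$ of $K\prod L$ with simplicial sections $\sigma_i$, but it then works abstractly: it uses the homotopy equivalence $v:|K\prod L|\to |K|\times|L|$ with homotopy inverse $u=|g|$ (where $g:K\times L\to K\prod L$ is the canonical simplicial map), pulls the sets back as $U_i=u^{-1}(|\Omega_i|)$, and defines $s_i=f\circ|\sigma_i|\circ u$ where $f:|P(K,L)|\to P(|K|,|L|)$ is a lift of $v\circ|\pi|$ through the topological path fibration $\pi'$; functoriality of $|\cdot|$ then gives $\pi'\circ s_i\simeq i_{U_i}$, i.e.\ sections only up to homotopy, which suffices because $hsecat=secat$ for fibrations. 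You instead build everything by hand: the barycentric-product section $\nu(x,y)=\sum_{a,b}s_ar_b\,(a,b)$ of the projection $|K\prod L|\to|K|\times|L|$ (which is legitimate, since $\operatorname{supp}(x)\times\operatorname{supp}(y)$ is a simplex of $K\prod L$ by definition of the categorical product), the closed sets $W_i=\nu^{-1}(|\Omega_i|)$, and explicit \emph{strict} sections $S_i$ obtained by affinely interpolating the realized Moore paths $|\sigma_i(a,b)|$ on a common time window; the path-complex simplex condition is exactly what keeps the interpolation inside $|K|$, and Theorem \ref{Int} handles continuity into the path space. What your approach buys is transparency and strictness of the sections, at the cost of the somewhat lengthy verifications of well-definedness and sup-metric continuity; what the paper's approach buys is brevity, at the cost of leaning on the homotopy equivalence $|K\prod L|\simeq|K|\times|L|$ and the homotopy lifting property. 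Both arguments rely on the same two external inputs you flag: finiteness of $K$ (implicit throughout the paper's Section 5) and the closed-cover formulation of sectional category over a compact polyhedron from \cite{Inv}, so neither of these is a gap relative to the paper's own standard of rigor.
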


\begin{proof}
Let $TC(K,L)=n$. Then there exist subcomplexes $\Omega_0,...,\Omega_n$ of $K \prod L$ and simplicial maps $\sigma_i:\Omega_i\to P(K,L)$ for $0\leq i\leq n$ such that $\pi \circ \sigma_i=i_{\Omega_i}$.
We know that 
there exist some homeomorphism $|K\times L| \to |K|\times |L|$ and  homotopy equivalence 
$v: |K \prod L| \to |K| \times |L|$. Consider the simplicial map $g:K\times L\to K \prod L$ where $g(s_1\times s_2)=s_1\sqcap s_2$ (see \cite{com} and \cite{Hat}).  Note that $|g|$ can be assumed as the homotopy inverse of $v$.
Put $u=|g|$. Define $U_i=u^{-1}(|\Omega_i|) \subseteq |K|\times|L|$ and $s_i:U_i\to P(|K|,|L|)$ by $s_i(x_1,x_2)=f\circ |\sigma_i|\circ u(x_1,x_2)$  where 
$f:|P(K,L)| \to P(| K|,| L|)$ is  lifting of the map $ v \circ |\pi| : |P(K,L)| \to |K|\times |L|$. Since the natural map $\pi': P(|K|,|L|) \to |K| \times |L|$ is a fibration, $f$ is continuous. 
Moreover, $\pi' \circ s_i = \pi' \circ f\circ |\sigma_i|\circ u|_{U_i} = v \circ |\pi| \circ |\sigma_i|\circ u|_{U_i}$, because $f$ is the lifting of $ v \circ |\pi|$ by $\pi'$. Also, since $|\cdot|$ is a functor $|\pi| \circ |\sigma_i| = |\pi \circ \sigma_i| = | i_{\Omega_i}|$. 
Therefore
\[
\pi' \circ s_i= v \circ |i_{\Omega_i}| \circ u|_{U_i}  = v \circ |i_{\Omega_i} \circ g|_{g^{-1} (\Omega_i)}|  = v \circ |g|_{g^{-1} (\Omega_i)}| = v \circ u|_{U_i} \simeq id|_{U_i} = i_{U_i},
\]
where $i_{U_i}$ is the inclusion map. Hence 
$TC(|K|,|L|)\leq TC(K,L)$.
\end{proof}

Now, we need to provide some useful notes using reference \cite{dis} to present an example for the computation of \( TC(K, L) \) and subsequently prove a theorem at the end.
Remark \ref{f} recalls applicable statements about subcomlexes satisfying Definition \ref{def} to calculate $TC$ of complexes.

\begin{remark}[\cite{dis}]\label{f}
\begin{enumerate}
\item
Any categorical product of two strongly collapsible complexes is also strongly collapsible.

\item
Every subcomplex of a strongly collapsible complex is categorical and every subcomplex which is strongly collapsible itself, is categorical too.

\item
Any categorical subcomplex of $K \prod L$ satisfies in conditions of Definition \ref{def} of $TC(K, L)$.
\end{enumerate}
\end{remark}

Now we are  able to study examples of complexes and present targeted simplicial complexity  of some these complexes. To do this, we use topological complexity of geometric realization of simplicial complexes which were previously studied and calculated in \cite{rel}.
Moreover, Theorem \ref{e} implies that the targeted simplicial complexity is bounded from below by the topological complexity of geometric realization.

\begin{example}\label{exa}
Let $K$ be the wedge of two triangulated circles, one of which is triangulated by four edges and the other one is triangulated by three edges. Put $K:=K_{2,2}\vee \partial \Delta ^{2}$ where complex $K_{2,2}$ is a bipartite gragh; That is $K$ is equal to
\[
\lbrace \emptyset ,\lbrace a_1\rbrace ,\lbrace a_2 \rbrace ,\lbrace b_1\rbrace ,\lbrace b_2\rbrace ,\lbrace a\rbrace ,\lbrace c\rbrace ,\lbrace a_1,b_1\rbrace ,\lbrace a_1,b_2\rbrace ,\lbrace a_2,b_1\rbrace ,\lbrace a_2,b_2\rbrace ,\lbrace a,c\rbrace ,\lbrace a,b_2\rbrace ,\lbrace b_2,c\rbrace \rbrace ,
\]

\noindent
and $L=\partial \Delta ^{2} =\lbrace \emptyset ,\lbrace a\rbrace ,\lbrace b_2 \rbrace ,\lbrace c\rbrace ,\lbrace b_2,c\rbrace ,\lbrace a,c\rbrace ,\lbrace a,b_2\rbrace\rbrace$ whose geometric realizations are shown in (a) and (b) of Figure \ref{Fig1} respectively.

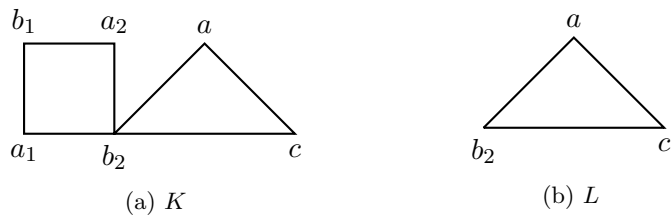
\begin{figure}[!ht]
\centering
\parbox{6cm}{
\subfloat[$K$]
{\begin{tikzpicture}[scale=0.6,  thick,simple/.style={ball color=black,inner sep=.8mm,circle,color=black,text=black, minimum size=0mm},‌
Ledge/.style={to path={
.. controls +(0:1) and +(45:1) .. (\tikztotarget) \tikztonodes}}
]
\node[above]  (a) at ( 2,2){$a$};
\node[below] (b_2) at ( 0,0){$b_2$};‌
\node[below] (c) at ( 4,0){$c$};‌‌‌
\node[above](a_2) at (0,2){$a_2$};
\node[above](a_1) at (-2,2){$b_1$};
\node[below](b_1) at (-2,0){$a_1$};
\draw(0,0)--(2,2)--(4,0)--(0,0)--(0,2)--(-2,2)--(-2,0)--(0,0);
\end{tikzpicture}}}
%
\parbox{2.5cm}{
\subfloat[$L$]
{\begin{tikzpicture}[scale=0.6,  thick,simple/.style={ball color=black,inner sep=1cm,circle,color=black,text=black, minimum size=0mm},‌
Ledge/.style={to path={
.. controls +(0:1) and +(45:1) .. (\tikztotarget) \tikztonodes}}
]
\node[above]  (a) at ( 2,2){$a$};
‌\node[below] (b_2) at ( 0,0){$b_2$};‌
\node[below] (c) at ( 4,0){$c$};‌‌‌
\draw(0,0)--(2,2)--(4,0)--(0,0);
\end{tikzpicture}}}
\caption{Geometric realization of $K$ and $L$.}\label{Fig1}
\end{figure}


Since $|K|$ is homeomorphic to $S^1 \vee S^1$ and $|L|$ is homeomorphic to  $S^1$, by \cite[Proposition 3.13]{rel} we have $TC(|K|,|L|)=2$. By Theorem \ref{e}, we can conclude that $TC(K,L)\geq 2$. It is enough to exhibit three subcomplexes covering $K \prod L$. Figure \ref{Fig 3} represents the categorical product $K \prod L$.

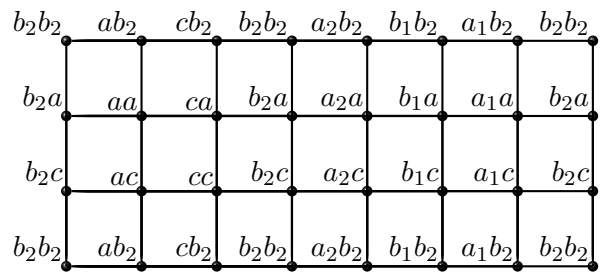
\begin{figure}[!ht]
\centering
\begin{tikzpicture}[  thick,simple/.style={ball color=black,inner sep=.5mm,circle,color=black,text=white, minimum size=0mm},‌
Ledge/.style={to path={
.. controls +(0:1) and +(45:1) .. (\tikztotarget) \tikztonodes}}
]
\foreach \x/\xtext in  {1/b_2,2/a,3/c,4/b_2,5/a_2,6/b_1,7/a_1,8/b_2}
\foreach \y/\ytext in {1/b_2,2/c,3/a,4/b_2}
{
\node[label={[label distance=-.2cm]145:‌${\tiny\xtext\ytext}$}] (\xtext\ytext) at (1*\x cm,1*\y cm)[simple]{};
}
\foreach \x/\xtext in  {1/b_2,2/a,3/c,4/b_2,5/a_2,6/b_1,7/a_1,8/b_2}
\foreach \y/\ytext in {1/b_2,2/c,3/a,4/b_2}
{
\draw(1*\x cm,1*\y cm) -- (1*\x+1 cm,1*\y cm)[simple]{};
\draw(1*\x cm,1*\y cm) -- (1*\x cm,1*\y+1 cm)[simple]{};
}
\end{tikzpicture}
\caption{Representation of simplicial complex $K \prod L$.}\label{Fig 3}
\end{figure}


Figure \ref{Fig 4} contains  two subcomlexes of $K \prod L$  satisfying Definition \ref{def}. Both of  $\Omega _1 $ and $\Omega _2 $ are the product of two strongly collapsible complexes, and then they are strongly collapsible. Let  $\Omega _3$ be the complement of $\Omega_1 \cup \Omega_2$ in $K \prod L$. It is subcomplex of a strongly collapsible complex which is the product of two strongly collapsible complexes. Then by Remark \ref{f}, they are strongly collapsible and whence they are subcomplexes satisfying Definition \ref{def}. Hence $TC(K,L)\leq 2$ and then $TC(K,L)=2$.
 
 \begin{figure}[!ht]
\centering
\parbox{6.5cm}{
\subfloat[$\Omega_1$]
{\begin{tikzpicture}[scale = 0.8, thick,simple/.style={ball color=black,inner sep=.5mm,circle,color=black,text=white, minimum size=0mm},‌
Ledge/.style={to path={
.. controls +(0:1) and +(45:1) .. (\tikztotarget) \tikztonodes}}
]
\filldraw[gray!30] (1cm,2cm) -- (3cm,2cm)--(3cm,4cm) --(1cm,4cm);
\filldraw[gray!30] (4cm,2cm) -- (7cm,2cm)--(7cm,4cm) --(4cm,4cm);
\foreach \x/\xtext in  {1/b_2,2/a,3/c,4/b_2,5/a_2,6/b_1,7/a_1,8/b_2}
\foreach \y/\ytext in {1/b_2,2/c,3/a,4/b_2}
{
\draw(1*\x cm,1*\y cm) -- (1*\x+1 cm,1*\y cm)[simple]{};
\draw(1*\x cm,1*\y cm) -- (1*\x cm,1*\y+1 cm)[simple]{};
}
\end{tikzpicture}}}
\parbox{5cm}{
\subfloat[$\Omega_2$]
{\begin{tikzpicture}[scale = 0.8, thick,simple/.style={ball color=black,inner sep=.5mm,circle,color=black,text=white, minimum size=0mm},‌
Ledge/.style={to path={
.. controls +(0:1) and +(45:1) .. (\tikztotarget) \tikztonodes}}
]
\filldraw[red!30] (2cm,1cm) -- (4cm,1cm)--(4cm,3cm) --(2cm,3cm);
\filldraw[red!30] (5cm,1cm) -- (8cm,1cm)--(8cm,3cm) --(5cm,3cm);
\foreach \x/\xtext in  {1/b_2,2/a,3/c,4/b_2,5/a_2,6/b_1,7/a_1,8/b_2}
\foreach \y/\ytext in {1/b_2,2/c,3/a,4/b_2}
{
\draw(1*\x cm,1*\y cm) -- (1*\x+1 cm,1*\y cm)[simple]{};
\draw(1*\x cm,1*\y cm) -- (1*\x cm,1*\y+1 cm)[simple]{};
}
\end{tikzpicture}}}
\caption{Subcomplexes of $K \prod L$ satisfying Definition \ref{def}. }\label{Fig 4}
\end{figure}
\end{example}
  
Example \ref{exa} can be generalized  to any complex obtained by the wedge of arbitrary number of circles as proven in Theorem \ref{th5.4n}. The sketch of proof is similar, but more details are needed to check.

\begin{theorem}\label{th5.4n}
Let  $m\leq n$ and also let  $K$ and $L\subseteq K$ be the wedge of $n$ and $m$ triangulated circles respectively. Then $TC(K,L)=2$.
\end{theorem}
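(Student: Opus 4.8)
The plan is to establish the two inequalities $TC(K,L)\ge 2$ and $TC(K,L)\le 2$ separately, following the pattern already set in Example \ref{exa}.

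For the lower bound I would pass to geometric realizations. Since $K$ and $L$ are wedges of triangulated circles, $|K|$ is homotopy equivalent to $\bigvee_n S^1$ and $|L|$ to $\bigvee_m S^1$, with $|L|\subseteq|K|$. Short's computation of the relative topological complexity of wedges of circles in \cite{rel} gives $TC(\bigvee_n S^1,\bigvee_m S^1)=2$ as soon as $n\ge 2$, and Theorem \ref{e} then yields $TC(K,L)\ge TC(|K|,|L|)=2$. The degenerate case $n=m=1$ forces $L=K$ to be a single circle, where $P(K,L)=PK$ and $\pi=p$, so that $TC(K,K)=TC(K)=2$ by \cite{dis}; hence the lower bound holds in all cases.

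For the upper bound I must exhibit three subcomplexes $\Omega_1,\Omega_2,\Omega_3$ covering $K\prod L$, each satisfying Definition \ref{def}. The idea is to generalize the staircase cover of Example \ref{exa}. Writing $K=\bigvee_{i=1}^n C_i$ and $L=\bigvee_{j=1}^m C_j'$, one has $K\prod L=\bigcup_{i,j}(C_i\prod C_j')$, the torus-like blocks being glued along the fibers over the wedge vertex $*$, namely $C_i\prod\{*\}$ and $\{*\}\prod C_j'$. On each block I would use arcs $A\subseteq K$ and $B\subseteq L$, obtained as spanning trees by deleting one edge from each circle; the product $A\prod B$ of two such strongly collapsible complexes is again strongly collapsible by Remark \ref{f}(1), hence categorical by Remark \ref{f}(2), and therefore satisfies Definition \ref{def} by Remark \ref{f}(3). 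Taking $\Omega_1=A_1\prod B_1$ and $\Omega_2=A_2\prod B_2$ for two suitably staggered choices of deleted edges, the residual complex $\Omega_3=(K\prod L)\setminus(\Omega_1\cup\Omega_2)$ should again be contained in a single product $A_3\prod B_3$ of spanning-tree complements, hence a subcomplex of a strongly collapsible complex and categorical once more by Remark \ref{f}(2). That three such blocks suffice is the combinatorial shadow of $\mathrm{cat}(T^2)=2$.

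The main obstacle is precisely making this three-fold cover work coherently on every block $C_i\prod C_j'$ at once: the deleted edges must be chosen so that the pieces agree on the shared fibers $C_i\prod\{*\}$ and $\{*\}\prod C_j'$ and glue to honest subcomplexes of $K\prod L$, while the arcs $A_k,B_k$ must stay genuine trees (never closing up into a circle) so that Remark \ref{f} applies. I expect the bookkeeping for arbitrary, possibly unequal, triangulations and arbitrary $m\le n$ to be the delicate part, whereas the verification that each resulting block is categorical is immediate from Remark \ref{f}. Combining the two bounds gives $TC(K,L)=2$.
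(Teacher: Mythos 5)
Your proposal follows essentially the same route as the paper: the lower bound via Theorem \ref{e} together with Short's computation of $TC(|K|,|L|)$ for wedges of circles, and the upper bound via a three-fold staircase cover of $K\prod L$ by products of trees, certified categorical through Remark \ref{f} exactly as in Example \ref{exa}. The bookkeeping you flag is resolved just as in the paper's figures by deleting, in each circle, one of the two edges adjacent to the wedge point to form $A_1,B_1$ and the other to form $A_2,B_2$, so that the residual edge footprints are stars at the wedge points (hence trees) and $\Omega_3$ sits inside a product of two strongly collapsible complexes; your separate treatment of the degenerate case $n=m=1$ via $TC(K,K)=TC(K)=2$ is a welcome extra that the paper's appeal to the realization glosses over.
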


\begin{proof}
Note that $n$ and $m$ are the first Betti number of $|K|$ and $|L|$.
By \cite [Proposition 3.13]{rel}, we have $TC(|K|,|L|)=2$. Then by Theorem \ref{d}, $TC(K,L)\geq 2$. Suppose that $K$ is the wedge of $n_1$ circles  triangulated with three edges, $n_2$ circles triangulated with four edges, ... and $n_l$ circles  triangulated with $l+2$ edges. Also let $L$ be the wedge of $m_1$ circles  triangulated with three edges, $m_2$ circles  triangulated with four edges, ... and $m_l$ circles  triangulated with $l+2$ edges. Then $m_1\leq n_1$, $m_2\leq n_2$, ... and $m_l\leq n_l$. 
We need three subcomplexes of $K \prod L$ satisfying Definition \ref{def}.
For better understanding, consider $K$ and $L$ as complexes represented in Figure \ref{Fig 5-1}.
 
 \vspace*{-1cm}
 
\begin{figure}[!ht]
\centering
\parbox{6cm}{
\subfloat[$K$]
{\begin{tikzpicture}[  thick,simple/.style={ball color=black,inner sep=.8mm,circle,color=black,text=black, minimum size=0mm},‌
Ledge/.style={to path={
.. controls +(0:1) and +(45:1) .. (\tikztotarget) \tikztonodes}}
]
\node[above left]  (a) at ( 0,0){$a$};
\node [left] (b_2) at (-1,.5){$b_2$};‌
\node[left](c_2) at (-1,-.5){$c_2$};
\node [below](b_1) at (-.5,-1){$b_1$};‌‌‌
\node[below](c_1) at (.5,-1){$c_1$};
\node[right](b_3) at (1,-.5){$b_3$};
\node[right](c_3) at (1,.5){$c_3$};
\node[right](b) at (.5,1){$b$};
\node[left](c) at (-.5,1){$c$};
\node[above](d) at (0,2){$d$};
\draw (0,0)--(-1,.5)--(-1,-.5)--(0,0)--(-.5,-1)--(.5,-1)--(0,0)--(1,-.5)--(1,.5)--(0,0)--(.5,1)--(0,2)--(-.5,1)--(0,0);
\draw[very thick] node[xshift=0 cm, yshift=3 cm]{}
;
\end{tikzpicture}}}
\parbox{4cm}{
\subfloat[$L$]
{\begin{tikzpicture}[  thick,vertex/.style={ball color=black,inner sep=.3mm,circle,color=black,text=black, minimum size=.5mm},‌
Ledge/.style={to path={
.. controls +(0:1) and +(45:1) .. (\tikztotarget) \tikztonodes}}
]
\node[left]  (a) at ( 0,0){$a$};
\node[above](d) at (0,2){$d$};
\node[below](c_1) at (.5,-1){$c_1$};
\node[right](b) at (.5,1){$b$};
\node[left](c) at (-.5,1){$c$};
\node [below](b_1) at (-.5,-1){$b_1$};‌‌‌
\draw (0,0)--(-.5,-1)--(.5,-1)--(0,0)--(.5,1)--(0,2)--(-.5,1)--(0,0);
\draw[very thick] node[xshift=0 cm, yshift=3 cm]{}
;
\end{tikzpicture}}}
\caption{A hypothetical example for better understanding.}\label{Fig 5-1}
\end{figure}
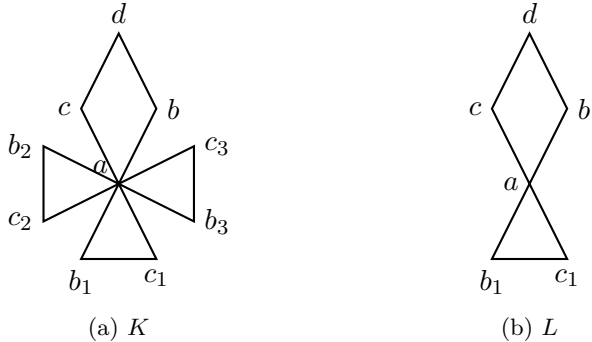

\noindent
Then the categorical product of $K$ and $L$ can be shown as in Figure \ref{Fig 5}.

\begin{figure}[!ht]
\centering
\begin{tikzpicture}[ thick,simple/.style={ball color=black,inner sep=.5mm,circle,color=black,text=white, minimum size=0mm},‌
Ledge/.style={to path={
.. controls +(0:1) and +(45:1) .. (\tikztotarget) \tikztonodes}}
]
\filldraw[gray!30](1,7)--(2,7)--(2,8)--(1,8);
\filldraw[gray!30](3,7)--(4,7)--(4,8)--(3,8);
\filldraw[gray!30](4,7)--(5,7)--(5,8)--(4,8);
\filldraw[gray!30](6,7)--(7,7)--(7,8)--(6,8);
\filldraw[gray!30](7,7)--(8,7)--(8,8)--(7,8);
\filldraw[gray!30](9,7)--(10,7)--(10,8)--(9,8);
\filldraw[gray!30](10,7)--(11,7)--(11,8)--(10,8);
\filldraw[gray!30](13,7)--(14,7)--(14,8)--(13,8);

\foreach \x/\xtext in  {1/a,2/b_1,3/c_1,4/a,5/b_2,6/c_2,7/a,8/b_3,9/c_3,10/a,11/b,12/d,13/c,14/a}
\foreach \y/\ytext in {1/a,2/b_1,3/c_1,4/a,5/b,6/d,7/c,8/a}
{
\node[label={[label distance=-.1cm,scale=1]300:‌${\xtext\ytext}$}] (\xtext\ytext) at (1*\x cm,1*\y cm)[simple]{};
}
\foreach \x/\xtext in  {1/a,2/b_1,3/c_1,4/a,5/b_2,6/c_2,7/a,8/b_3,9/c_3,10/a,11/b,12/d,13/c,14/a}
\foreach \y/\ytext in {1/a,2/b_1,3/c_1,4/a,5/b,6/d,7/c,8/a}
{
\draw(1*\x cm,1*\y cm) -- (1*\x+1 cm,1*\y cm)[simple]{};
\draw(1*\x cm,1*\y cm) -- (1*\x cm,1*\y+1 cm)[simple]{};

}

\end{tikzpicture}
\caption{Representation of $K \prod L$ }\label{Fig 5}
\end{figure}

Let $H$ be the subcomplex of $K \prod L$ consisting of all maximal simplices in the same horizontal line.  
The complex  $H$ contains $2(n_1+n_2+...+n_l)$ tetrahedrons with a common edge, say $\tau _1, \tau _2,..., \tau_{2(n_1+...+n_l)}$; See Figure \ref{Fig 5}. Additionally, a strongly collapsible subcomplex of $H$ can not contain more than two tetrahedrons $\tau _i$, and if it contains two of them, these two tetrahedrons can not be in the same cycle. If not, then the  realization of  complex $H$ would contain a cycle or more.  Hence it is not contractible and then by Theorem \ref{j}, this subcomplex is not strongly collapsible.

Let $\Omega$ be a subcomplex of $K \prod L$ satisfying Definition \ref{def}, and let $i_0:K\to K \prod L$ be a map defined by $i_0 (v)=(v,v_0)$ where $v_0$ is an arbitrary vertex in $L$. We show that the subcomplex 
\(
\Lambda= i_0 ^{-1}(\Omega )=p _1 (K\times \lbrace v_0\rbrace \cap \Omega )\subseteq K
\)
is categorical in $K$ where $p_1$ is the projection map on the first component. By Definition \ref{def}, there exists map $\sigma :\Omega \to L$ such that $\Delta \circ \sigma \sim i_{\Omega}$ or $(\sigma ,\sigma )\sim i_{\Omega}$. Thus 
$(\sigma|_{\Lambda}, \sigma|_{\Lambda})\sim i_0|_{\Lambda}$ which implies that $id|_{\Lambda} \sim \sigma|_{\Lambda} \sim c_{v_0}|_{\Lambda}$. Therefore $id|_{i_0^{-1}(\Omega)}\sim c_{v_0}$ which means that ${i_0^{-1}(\Omega)}$ is categorical.
So $i_0 ^{-1}(\Omega)$ is not equal to $K$ and it can not contain any cycle of $K$. Then $\Omega \cap H$ contains at most $2n_1+3n_2+(l+1)n_l$ tetrahedrons. Also none of other $n_1+n_2+...+n_l$ tetrahedrons have common vertical edge, because if not,  $i_0^{-1}(\Omega)$ would contain a cycle, which is a contradiction.

Analogously, let $V$ be the subcomplex consisting of all the maximal simplices of $K \prod L$ in the same vertical line. We observe that $\Omega \cap V$ contains at most $2m_1+3m_2+(l+1)m_l$ tetrahedrons and no more of $m_1+m_2+...+m_l$ have a common horizontal edge. We find a covering of $K \prod L$ by three subcomplexes satisfying Definition \ref{def}; $K \prod L=\Omega _1 \cup \Omega _2 \cup \Omega _3$, which $\Omega _1$ and $\Omega _2$ are exhibited in Figure \ref{Fig 6}, and $\Omega _3$ is the complement of $\Omega _1 \cup \Omega _2$ in $K \prod L$. One can verify that these subcomplexes are categorical by using a similar argument as used in Example \ref{exa}.

\begin{figure}[!ht]
\centering
\begin{tikzpicture}[scale = 0.6,  thick,simple/.style={ball color=black,inner sep=.5mm,circle,color=black,text=white, minimum size=0mm},‌
Ledge/.style={to path={
.. controls +(0:1) and +(45:1) .. (\tikztotarget) \tikztonodes}}
]
\filldraw[gray!30] (1cm,6cm) -- (3cm,6cm)--(3cm,8cm) --(1cm,8cm);
\filldraw[gray!30](4,6)--(6,6)--(6,8)--(4,8);
\filldraw[gray!30](7,6)--(9,6)--(9,8)--(7,8);
\filldraw[gray!30](10,6)--(13,6)--(13,8)--(10,8);
\filldraw[gray!30] (1cm,2cm) -- (3cm,2cm)--(3cm,5cm) --(1cm,5cm);
\filldraw[gray!30](4,2)--(6,2)--(6,5)--(4,5);
\filldraw[gray!30](7,2)--(9,2)--(9,5)--(7,5);
\filldraw[gray!30](10,2)--(13,2)--(13,5)--(10,5);

\foreach \x/\xtext in  {1/a,2/b_1,3/c_1,4/a,5/b_2,6/c_2,7/a,8/b_3,9/c_3,10/a,11/b,12/d,13/c,14/a}
\foreach \y/\ytext in {1/a,2/b_1,3/c_1,4/a,5/b,6/d,7/c,8/a}
{
\draw(1*\x cm,1*\y cm) -- (1*\x+1 cm,1*\y cm)[simple]{};
\draw(1*\x cm,1*\y cm) -- (1*\x cm,1*\y+1 cm)[simple]{};

}
\end{tikzpicture}
\caption{The subcomplex $\Omega_1$}
\end{figure}

\begin{figure}[!ht]
\centering
\begin{tikzpicture}[scale = 0.6,  thick,simple/.style={ball color=black,inner sep=.5mm,circle,color=black,text=white, minimum size=0mm},‌
Ledge/.style={to path={
.. controls +(0:1) and +(45:1) .. (\tikztotarget) \tikztonodes}}
]
\filldraw[red!30] (2cm,5cm) -- (4cm,5cm)--(4cm,7cm) --(2cm,7cm);
\filldraw[red!30](5,5)--(7,5)--(7,7)--(5,7);
\filldraw[red!30](8,5)--(10,5)--(10,7)--(8,7);
\filldraw[red!30](11,5)--(14,5)--(14,7)--(11,7);
\filldraw[red!30] (2cm,1cm) -- (4cm,1cm)--(4cm,4cm) --(2cm,4cm);
\filldraw[red!30](5,1)--(7,1)--(7,4)--(5,4);
\filldraw[red!30](8,1)--(10,1)--(10,4)--(8,4);
\filldraw[red!30](11,1)--(14,1)--(14,4)--(11,4);

\foreach \x/\xtext in  {1/a,2/b_1,3/c_1,4/a,5/b_2,6/c_2,7/a,8/b_3,9/c_3,10/a,11/b,12/d,13/c,14/a}
\foreach \y/\ytext in {1/a,2/b_1,3/c_1,4/a,5/b,6/d,7/c,8/a}
{
\draw(1*\x cm,1*\y cm) -- (1*\x+1 cm,1*\y cm)[simple]{};
\draw(1*\x cm,1*\y cm) -- (1*\x cm,1*\y+1 cm)[simple]{};
}
\end{tikzpicture}
\caption{The subcomplex $\Omega_2$}\label{Fig 6}
\end{figure}
\end{proof}



\end{document}